\documentclass[12pt]{amsart}
\usepackage{a4wide}
\usepackage[utf8]{inputenc}
\usepackage[T1]{fontenc} 
\usepackage[english]{babel}
\usepackage{amsmath}
\usepackage{amsfonts}
\usepackage{amssymb}
\usepackage{amsthm}
\usepackage{graphicx}
\usepackage{subcaption}
\usepackage{enumerate}
\usepackage{color}
\usepackage{todonotes}
\usepackage{bbm}
\usepackage{verbatim}
\usepackage{url}

\renewcommand{\d}{\mathrm{d}}
\newcommand{\E}[1]{\mathbb{E}\left[#1\right]} 
\newcommand{\no}[1]{\left\Vert#1\right\Vert} 
\newcommand{\nos}[1]{\left\Vert#1\right\Vert^2} 
\newcommand{\be}[1]{\left\vert#1\right\vert} 
\newcommand{\bes}[1]{\left\vert#1\right\vert^2} 

\newcommand{\fe}[1]{\frac{\nabla #1}{| \nabla #1 |_{\epsilon}}} 

\newcommand{\R}{\mathbb{R}}

\newcommand{\N}{\mathbb{N}}

\newcommand{\into}{\int\limits_{\mathcal{O}}}
\renewcommand{\epsilon}{\varepsilon}
\newcommand{\eps}{\epsilon}
\newcommand{\ska}[1]{\left( #1 \right)} 
\renewcommand{\div}{\mathrm{div}}

\newcommand{\intt}{\int_0^t}

\newcommand{\F}{\mathcal{F}}

\renewcommand{\Xi}{X_{\epsilon,h}^{i}}


\newcommand{\Hz}{\mathbb{H}^1_0}
\renewcommand{\L}{\mathbb{L}^2}
\newcommand{\Hm}{\mathbb{H}^{-1}}
\renewcommand{\phi}{\varphi}
\renewcommand{\F}{\mathcal{F}}
\renewcommand{\P}{\mathbb{P}}

\newcommand{\vh}{\varphi_h}

\renewcommand{\O}{\mathcal{O}}
\newcommand{\D}{\O}

\newcommand{\Xe}{X^{\epsilon}}

\newcommand{\Yn}{Y_{\eps,h}^{n}}
\newcommand{\Ynm}{Y_{\eps,h}^{n-1}}
\newcommand{\Ye}{Y^{\epsilon}}
\newcommand{\fn}{\Sigma_h^{n}}
\newcommand{\fnm}{\Sigma_h^{n-1}}
\newcommand{\f}{\Sigma}
\newcommand{\ftau}{\Sigma_h^{\tau}}

\newcommand{\Xn}{X_{\eps,h}^{n}}

\newcommand{\Xnm}{X_{\eps,h}^{n-1}}

\renewcommand{\F}{\mathcal{F}}
\renewcommand{\P}{\mathbb{P}}

\renewcommand{\O}{\mathcal{O}}

\newcommand{\Je}{\mathcal{J}_{\epsilon,\lambda}}
\newcommand{\J}{\mathcal{J}_{\lambda}}


\newtheorem{thms}{Theorem}[section]
\newtheorem{defs}{Definition}[section]
\newtheorem{cors}{Corollary}[section]

\newtheorem{remark}{Remark}[section]
\newtheorem{lemma}{Lemma}[section]


\newcommand{\blue}{}
\newcommand{\redd}{}
\newcommand{\red}{}
\newcommand{\xx}{x}
\begin{document} 

\title{A posteriori estimates for the stochastic total variation flow}

\author{\v{L}ubom\'{i}r Ba\v{n}as}
\address{Department of Mathematics, Bielefeld University, 33501 Bielefeld, Germany}
\email{banas@math.uni-bielefeld.de}

\author{Andr\'e Wilke}
\address{Department of Mathematics, Bielefeld University, 33501 Bielefeld, Germany}
 \email{ awilke@math.uni-bielefeld.de}

\thanks{Funded by the Deutsche Forschungsgemeinschaft (DFG, German Research Foundation) – SFB 1283/2 2021 – 317210226.}

\begin{abstract}
We derive  a posteriori error estimates for a fully discrete time-implicit finite element approximation of the stochastic total variaton flow (STVF) with additive space time noise.
The estimates are first derived for an implementable fully discrete approximation of a regularized stochastic total variation flow. We then show that the derived a posteriori estimates remain
valid for the unregularized flow up to a perturbation term that can be controlled by the regularization parameter.
Based on the derived a posteriori estimates we propose a 
pathwise algorithm for the adaptive space-time refinement
and perform numerical simulation for the regularized STVF to demonstrate the behavior of the proposed algorithm.
\end{abstract}

\maketitle
\section{Introduction}
In this paper we derive {a posteriori}  error estimates for a fully discrete finite element approximation of the  stochastic total variation flow (STVF)
 \begin{align}\label{STVF}
\d X=& \div\left(\frac{\nabla X}{\be{\nabla X}}\right) \d t -\lambda (X- g) \d t +\sigma\d W, &&\text{in } (0,T)\times \O, 
\nonumber\\
X  =& 0 && \text{on } (0,T)\times \partial \O, 
\\ \nonumber
X(0)=&x_0 &&\text{in } \O,
\end{align}
where $\O \subset \R^d$, $d\geq 1$ is a bounded, convex {polyhedral} domain, 
$\lambda \geq 0$, $T>0$ are constants and $x_0,\, g \in \Hz$ are given functions. 
{The term $W$ is a $\mathbb{R}$-valued Wiener process (Brownian motion) on a given filtered probability space $(\Omega,\F,\{\F_t\}_{0 \leq t \leq T},\P)$, 
and {$\sigma \in L^2(\Omega;{\blue C}([0,T];\Hz))$} is a $\{\mathcal{F}_t\}$-adapted stochastic process on $(\Omega,\mathcal{F},\mathbb{P})$. 
Equation (\ref{STVF}) can formally be interpreted as a stochastically perturbed gradient flow of the penalized total variation energy functional
\begin{align}\label{jfunc}
\J(u)= \into \be{\nabla u} \d \xx +\frac{\lambda}{2} \into \bes{u-g} \d \xx.
\end{align}
Due to the singular character of total variation flow (\ref{STVF}), it is convenient to perform numerical simulations using a regularized problem
\begin{align}\label{reg.STVF}
 \d \Xe&=  \div\left(\frac{\nabla \Xe}{\vert \nabla \Xe \vert_\eps}\right)\d t-\lambda(\Xe-g)\d t+\sigma\d W &&\text{in } (0,T)\times \O, \nonumber\\
 \Xe & = 0 && \text{on } (0,T)\times \partial \O,  \\
 \Xe(0)&=x_0 &&\text{in } \O \nonumber\,, 
\end{align}
where $\vert \cdot \vert_\eps= \sqrt{\vert \cdot \vert^2+\eps^2  } $ with a regularization parameter $\epsilon >0$.
 In the deterministic setting ($W\equiv 0$) equation \eqref{reg.STVF} corresponds to the gradient flow of the regularized energy functional
\begin{align}\label{def_jepslam}
\Je(u)= \into \sqrt{\bes{\nabla u} +\epsilon^2}\, \d \xx + \frac{\lambda}{2} \into \bes{u-g} \d \xx.
\end{align} 
It is well-known that the minimizers of the above regularized energy functional converge
to the minimizers of (\ref{jfunc}) for $\epsilon \rightarrow 0$, cf. \cite{Prohl_TVF_numerics} and the references therein.
Owing to the singular character of the diffusion term in (\ref{STVF})
the classical variational approach for the analysis of stochastic partial differential equations (SPDEs), see e.g. \cite{kr_07}, \cite{Roeckner_book}, is not applicable to this problem. To study singular gradient flow problems it is convenient to apply the solution framework developed in \cite{Roeckner_TVF_paper} 
which characterizes the solutions of (\ref{STVF}) as stochastic variational inequalities (SVIs). 
Convergent fully discrete finite element approximations of \eqref{STVF} and \eqref{reg.STVF} were proposed in \cite{our_paper, stvf_erratum} and \cite{stvf_martingale}. 
{Throughout the paper, we refer to the solutions which satisfy a stochastic variational inequality as SVI solutions, and to the classical SPDE solutions as variational solutions.}

We note that the use of stochastically perturbed gradient flows has proven useful in image processing.
Stochastic numerical methods for models with nonconvex energy functionals are able to avoid local energy minima and thus achieve faster convergence and/or 
more accurate results than their deterministic counterparts. We refer, for instance,
to \cite{kp06} where a stochastic level-set method is applied in image segmentation, 
and \cite{swp14} which uses stochastic gradient flow of a modified (non-convex) total variation energy functional for binary tomography.

Only few work exist on the adaptive finite element solution of the total variation problem;
we mention \cite{Veeser} and \cite{bartels2015}, \cite{bartels2021} which study a posteriori estimates and adaptive finite element schemes for the (time independent and deterministic)
minimization problems (\ref{def_jepslam}) and (\ref{jfunc}), respectively. 

{\blue 
Equation \eqref{STVF} is a special case of the $p$-Laplace flow with $p=1$.
A posteriori estimates for the (deterministic) $p$-Laplace flow for $p>1$ have been studied in \cite{cly06}, \cite{k13}, as far as we are aware the case $p=1$ has not been considered so far.
The works \cite{noch00}, \cite{mn06} considers adaptivity for the semi-discretization in time of deterministic minimal surface flows,
i.e., problem (\ref{reg.STVF}) with $\eps=1$ and $\sigma= 0$.
We also mention the works \cite{fv03}, \cite{ln05} on the a posteriori estimates for the (deterministic) mean curvature flow of graphs
which shares many similarities with the minimal surface flow equation.
}

{\blue Concerning the numerical approximation of the deterministic regularized problem (\ref{reg.STVF}) with $\sigma =0$, we mention
\cite{Prohl_TVF_numerics} which shows convergence of implicit numerical approximation of (\ref{reg.STVF}) and \cite{bdn18} which derives a priori error estimates
for a semi-implicit discretization of (\ref{reg.STVF}). 
The work \cite{cp11} proposes numerical algorithms for the deterministic counterpart of (\ref{STVF}) with $\sigma=0$ without the use of regularization
and \cite{bns14} derive a priori error estimates for the numerical approximation of the unregularized deterministic problem (\ref{STVF}).}
As far as we are aware, even in the deterministic setting,
the present work is the first to address a posteriori estimates and adaptivity for the finite element approximation of the time dependent total variation flow.
Very few results exist on adaptive finite element methods for stochastic partial differential equations (SPDEs). Algorithmic aspects of adaptive finite element approximation of SPDEs
have been addressed in \cite{sllg_book},\cite{Schellnegger}. The first work to derive rigorous a posteriori estimates for (linear) SPDEs is \cite{Prohl_adapt}; for related overview of time-adaptive
methods for stochastic differential equations we refer the reader to the references in this work. As far as we are aware, apart from the present paper,
the only other result that studies a posteriori estimates for nonlinear SPDEs is \cite{bv21}.



The present work is inspired by \cite{Prohl_adapt}, where a posteriori estimates are derived for the stochastic heat equation with additive noise.
The approach of \cite{Prohl_adapt} uses a transformation of the SPDE to a random PDE (RPDE) which enjoys improved time regularity properties that enable the postulation of an error equation in a form that can be used to derive the a posteriori estimates.
Due to the nonlinear character of the diffusion term in (\ref{reg.STVF}) the (linear) transformation used in \cite{Prohl_adapt} is not directly applicable in the present setting.
Nevertheless, we are able to adopt the approach 
by only applying the transformation to the linear part of the problem which is related to the time derivative in the resulting RPDE; an advantage of this approach is that we only
require $\mathbb{H}^1$-regularity of the noise term, as opposed to the $\mathbb{H}^2$-regularity required in \cite{Prohl_adapt}. 
We benefit from the improved time-regularity of the resulting RPDE: the fact that the solution of the RPDE possesses a generalized time derivative
enables the formulation of an error equation for the fully discrete numerical approximation.
Consequently, we derive an error bound for the finite element approximation of the regularized problem \eqref{reg.STVF}.
Furthermore, by using the convergence of \eqref{reg.STVF} to the SVI solution of \eqref{STVF} for $\varepsilon\rightarrow 0$, 
we obtain a posteriori estimates for the finite element approximation of the SVI solution of \eqref{reg.STVF}.
As far as we are aware, the present work is the first to show an a posteriori estimate of implementable numerical approximation for singular stochastic gradient flows
in the framework of stochastic variational inequalities.

{\blue
We believe that the transformation approach developed in the present paper can be adapted
to generalize a range of different results on adaptivity for deterministic nonlinear problems
(see for instance, \cite{nsv00}, \cite{ln05}, \cite{ns06}, \cite{vohr13}, \cite{vohr15}, \cite{vohr21}, and the references therein)
to the stochastic setting.
}

The paper is organized as follows. In Section~\ref{sec_notation} we introduce the notation, summarize useful analytical results on the solutions of STVF and introduce the transformation
of the SPDE (\ref{reg.STVF}) into a RPDE.
A practical fully discrete finite element scheme for the approximation of the regularized problem (\ref{reg.STVF})
is introduced in Section~\ref{Sect: Discretization}.
The a posteriori  error estimates for the proposed finite element approximation are derived in Section~\ref{sec_a_posti}.
{\blue In Section~\ref{sec_lin} we generalize the a posteriori estimate to take into account the additional error due to the linearization of the nonlinearity in the numerical scheme.}
In Section~\ref{sec_num} we formulate the adaptive finite element algorithm and perform numerical simulation
to demonstrate its properties.

\section{Notation and Preliminaries}\label{sec_notation}
For $1\leq p \leq \infty  $, we denote by $(\mathbb{L}^p,\no{\cdot}_{\mathbb{L}^p})$ the standard spaces of $p$-th order integrable functions on $\O$,
and use $\no{\cdot}= \no{\cdot}_{\L}$ and $(\cdot,\cdot)=(\cdot,\cdot)_{\L}$ for the $\L$-inner product. 
For $k \in \N$ we denote the usual Sobolev space  on $\O$ as $(\mathbb{H}^k,\no{\cdot}_{\mathbb{H}^k})$,
and $(\Hz,\no{\cdot}_{\Hz})$ stands for the $\mathbb{H}^1$ space with zero trace on $\partial \O$ with its dual space $(\Hm,\no{\cdot}_{\Hm})$. 
Furthermore, we set $\langle \cdot ,\cdot \rangle=\langle \cdot ,\cdot \rangle_{\Hm \times \Hz}$, where $\langle \cdot ,\cdot \rangle_{\Hm \times \Hz}$ 
is the duality pairing between $\Hz$ and $\Hm$. 

%
\begin{defs}\label{Bounded Variation} 
 A function $u \in \mathbb{L}^1$ is called a function of bounded variation, if its total variation
 \begin{align*}
 \into \be{\nabla u}\d\xx  = \sup\left\{-\into u\, \div\, \mathbf{v} \d\xx;~ \mathbf{v} \in C^{\infty}_0(\O,\R^d), ~\no{\mathbf{v}}_{\mathbb{L}^\infty}\leq 1\right\},
 \end{align*}
 is finite. The space of functions of bounded variations is denoted by $BV(\O)$.

For $u \in BV(\O)$ we denote
$$
 \into \sqrt{\be{\nabla u}^2 + \eps^2}\d\xx  = \sup\left\{\into \Big(-u\, \div\, \mathbf{v} + \eps\sqrt{1-|\mathbf{v}(\xx)|^2}\Big)\d\xx;~ \mathbf{v} \in C^{\infty}_0(\O,\R^d), ~\no{\mathbf{v}}_{\mathbb{L}^\infty}\leq 1\right\}\,.
$$
\end{defs}
 {We define the functionals
\begin{align*}
\bar{ \mathcal{J}}_{\eps,\lambda}(u)=
\begin{cases}
\mathcal{J}_{\eps,\lambda}(u) + \int_{\partial \O} \be{\gamma_0(u)} \mathcal{H}^{n-1} ~~ \text{for}~ u \in BV(\O)\cap \L,\\
+\infty  \qquad\qquad\qquad\qquad\quad\,\, ~~ \text{for}~ u \in BV(\O)\setminus \L
\end{cases}
\end{align*}
and 
\begin{align*}
\bar{ \mathcal{J}}_\lambda (u)=
\begin{cases}
\mathcal{J}_\lambda (u) + \int_{\partial \O} \be{\gamma_0(u)} \mathcal{H}^{n-1}  ~~\text{for}~ u \in BV(\O)\cap \L,\\
+\infty   \qquad\qquad\qquad\qquad\quad ~~\text{for}~ u \in BV(\O)\setminus \L,
\end{cases}
\end{align*} }
where $\gamma_0(u) $ is the trace of $u$ on the boundary and $\d \mathcal{H}^{n-1}$
is the Hausdorff measure on $\partial \O$.
$\bar{\mathcal{J}}_{\eps,\lambda}$ and $\bar{\mathcal{J}}_{\lambda}$ are  both convex and lower semicontinuous over $\L$ and  the lower semicontinuous hulls of $\bar{\mathcal{J}}_{\eps,\lambda}\vert_{\Hz}$ or $\bar{\mathcal{J}}_{\lambda}\vert_{\Hz}$ respectively, cf. \cite[Proposition 11.3.2]{book_attouch}.
A SVI solution of (\ref{reg.STVF}) and (\ref{STVF}) was defined in \cite{our_paper} (see also \cite{Roeckner_TVF_paper})
 as a stochastic variational inequality.
\begin{defs}\label{def_varsoleps}
Let $0  < T < \infty$, {$\varepsilon \in [0,1]$} and {$x_0 \in L^2(\Omega,\F_0;\L)$ and $g \in \L$}.
Then a $\{\F_t\}$-{adapted} map {$\Xe \in L^2(\Omega; C([0,T];\L))\cap  L^1(\Omega; L^1((0,T);BV(\O)))$ 
(denoted by $X \in L^2(\Omega; C([0,T];\L))\cap  L^1(\Omega; L^1((0,T);BV(\O)))$ for $\eps=0$)}
is called an {SVI  solution} of (\ref{reg.STVF}) (or (\ref{STVF}) if $\varepsilon=0$) if $\Xe(0)=x_0$ ($X(0)=x_0$), and
for each $\{\F_t\}$-progressively measurable process $G\in L^2(\Omega \times (0,T),\L)  $ and for each $\{\F_t\}$-adapted $\L$-valued process 
{$Z$} with $\P$-a.s. continuous sample paths, {s.t. $Z \in L^2(\Omega \times (0,T);\Hz)$}, which satisfy the equation 
\begin{align}\label{test}
\d Z(t)= -G(t) \d t +{\redd \sigma(t)}\d W(t), ~ t\in[0,T],
\end{align}
it holds for {$\eps \in (0,1]$} that
\begin{align*}
\frac{1}{2}& \E{\nos{\Xe(t)-Z(t)}}+\E{\intt {\bar{\mathcal{J}}_{\eps,\lambda}}(\Xe(s)) \d s} \nonumber\\
&\leq  \frac{1}{2} \E{\nos{x_0-Z(0)}}+\E{\intt { \bar{\mathcal{J}}_{\eps,\lambda}}(Z(s)) \d s}  \\
&
+\E{\intt \ska{\Xe(s)-Z(s),G} \d s}\,,\nonumber
\end{align*}
and analogically for $\eps=0$ it holds that
\begin{align*}
\frac{1}{2}& \E{\nos{X(t)-Z(t)}}+\E{\intt { \bar{\mathcal{J}}_{\lambda}}(X(s)) \d s} \nonumber\\
&\leq  \frac{1}{2} \E{\nos{x_0-Z(0)}}+\E{\intt { \bar{\mathcal{J}}_{\lambda}}(Z(s)) \d s}  \\
&
+\E{\intt \ska{X(s)-Z(s),G} \d s}\nonumber.
\end{align*}
\end{defs}
\begin{remark}
 In \cite{our_paper}, \cite{Roeckner_TVF_paper} SVI solutions of (\ref{STVF}) and (\ref{reg.STVF}) are defined as SVI solutions in the
sense of Definition \ref{def_varsoleps} for $\sigma \equiv X$ and $\sigma \equiv \Xe$, respectively. 
This corresponds to the multiplcative noise case, which was considered in \cite{our_paper}. 
We note that the well-posedness result from \cite{our_paper} directly caries over to the present case of (\ref{STVF}), (\ref{reg.STVF}) with additive noise.
Furthermore, we note that the results of the present paper are directly applicable to the case of $\mathbb{H}^1$-regular trace class Wiener process, cf. \cite{Prohl_adapt}.
\end{remark}
Thanks to the $\Hz$-regularity of the data $x_0$ $g$ we may follow the line of arguments in the proof of \cite[Lemma~3.1~and~3.2]{our_paper} to conclude the existence and uniqueness 
of a $\{\mathcal{F}_t\}$-adapted stochastic process $\Xe \in L^2(\Omega;C([0,T];\L))\cap L^2(\Omega;L^{\infty}([0,T];\Hz))$
for any $\eps>0$, such that the following variational formulation holds $\mathbb{P}$-a.s for all $t \in [0,T]$:
\begin{align}\label{epsilon_variational_formulation}
\ska{\Xe(t),\phi }= &\ska{x_0,\phi}-\intt\ska{\fe{\Xe(s)},\nabla\phi} \d s-\lambda\intt\ska{\Xe(s)-g,\phi} \d s
\nonumber \\
&+\intt\ska{\sigma(t),\phi} \d W(s)~~\qquad \forall \phi \in \Hz.
\end{align}
Furthermore, there exists a $C\equiv C(T)>0$ such that the following estimates holds
\begin{align}\label{l2_est}
\E{\sup\limits_{t \in [0,T]} \nos{\Xe(t)}} \leq C \left(\E{\nos{x_0}}+ \|g\|^2 +\E{\sup\limits_{t \in [0,T]}\nos{\sigma(t)}} \right)\,,
\end{align}
and
\begin{align}\label{gradient_Estimate}
\E{\sup\limits_{t \in [0,T]} \nos{\nabla \Xe(t)}} \leq C \left(\E{\nos{\nabla x_0}}+ \|\nabla g\|^2+\E{\sup\limits_{t \in [0,T]}\nos{\nabla\sigma(t)}} \right)\,.
\end{align}

{\bf Transformation into a RPDE.}
The lack of time-differentiability of the solutions of SPDEs (which is due to the low time-regularity of the driving Wiener process $W$)
is a major obstacle in the formulation of a suitable error equation for the numerical approximation
which is necessary to derive the a posteriori estimates.

To overcome this issue for the solution $\Xe$ of \eqref{reg.STVF}, we consider the transformation
$\mathbb{P}$-a.s, for a.a $t \in [0,T]$, a.e. in $\O$:
 \begin{align}\label{Transformation}
 \Ye(t)= \Xe(t)-\intt \sigma(s)\d W(s).
 \end{align}
On noting the regularity properties (\ref{l2_est}), (\ref{gradient_Estimate}) 
the triangle inequality and the Burkholder-Davis-Gundy inequality imply that
$$
\E{\sup_{t \in [0,T]}\|\Ye(t)\|_{\Hz}^2} \leq C \E{\sup_{t \in [0,T]}\|\Xe(t)\|_{\Hz}^2} + C \E{\int_0^T\|\sigma(t)\|_{\Hz}^2 \mathrm{d}t}\leq C\,,
$$
i.e., the square integrable, $\{\mathcal{F}_t\}_{0 \leq t \leq T}$-adapted process from (\ref{Transformation}) satisfies $\Ye \in L^2( \Omega;L^{\infty}([0,T];\Hz))$.
Furthermore, $\Ye$ satisfies $\mathbb{P}$-a.s for a.a. $t \in [0,T]$
\begin{align}\label{reg.RTVF}
 {\langle \partial_t \Ye(t),\phi\rangle } & =-\ska{\fe{\Xe(t)},\nabla \phi}-\lambda(\Xe(t)-g,\phi) ~~\quad \forall \phi \in \Hz\,,
\\ \nonumber
\Ye(0) & = x_0 \,.
\end{align}
By standard arguments, see for instance \cite[Theorem 5.9.3]{book_evans} (cf., also \cite[Section 2.1]{dd96}), it can be shown that
 $\partial_t \Ye \in L^2([0,T];\Hm)$, $\Ye \in C([0,T];\mathbb{L}^2)$ $\mathbb{P}$-a.s.
For instance, the $\mathbb{P}$-a.s bound $\partial_t \Ye \in L^2([0,T];\Hm)$ can be deduced (formally) from (\ref{reg.RTVF}) by the Cauchy-Schwarz and triangle inequalities
$$
\langle \partial_t \Ye,\phi\rangle \leq C \left( \left\|\fe{\Xe}\right\|+\lambda(\|\Xe\|+\|g\|)\right)\|\phi\|_{\Hz} \qquad \forall \phi \in \Hz\,,
$$
on noting (\ref{l2_est}), (\ref{gradient_Estimate}) and $\be{\fe{\cdot}} < 1$; the result can be derived rigorously by Galerkin approximation.

\begin{remark} 
In \cite{Prohl_adapt} the transformation \eqref{Transformation} is used to reformulate the considered linear SPDE into an equivalent linear RPDE which only involves
the transformed solution.
An analogous transformation of \eqref{reg.STVF} into a RPDE that only involves the transformed solution $\Ye$
is not applicable in the present setting due to the nonlinearity in \eqref{reg.STVF}.
Hence, we just transform the linear part of equation  \eqref{reg.STVF}
and, as discussed above, the existence of $\partial_t \Ye$ follows from the uniqueness and regularity which are available for the variational solution $\Xe$ of the SPDE \eqref{reg.STVF}.
We also remark that the present ''partial'' transformation has the advantage that it only requires $\mathbb{H}^1$-regularity of the noise as opposed to the $\mathbb{H}^2$-regularity required in \cite{Prohl_adapt}·
\end{remark}

\section{Numerical approximation}\label{Sect: Discretization}
{For simplicity we assume throughout this section that all discretization parameters are deterministic,
a generalization to random setting is straightforward, also, cf., Section~\ref{sec_num}.}

We consider a partition $0=t_0 < t_1< \dots< t_N=T$ of the time interval $[0,T]$
with time steps $\{\tau_n\}_{n \geq 1}$ defined as $\tau_n=t_n-t_{n-1}$. At time level $t_n$ we consider a quasi-uniform partition $\mathcal{T}_h^n$ of $\mathcal{O}$ into simplices 
which is obtained from $\mathcal{T}_h^{n-1}$ by a suitable refinement/coarsening procedure.
The set of all interior faces of the elements of the mesh $\mathcal{T}_h^n$ is denoted as
$\mathcal{E}_h^n = \bigcup_{T \in \mathcal{T}_h^n}\partial T\setminus \partial \O$. The diameter of $T \in \mathcal{T}_h^n$ and $E \in \mathcal{E}_h^n$ are denoted by $h_T$ and $h_E$, respectively. 
Given a $E \in \mathcal{E}_h^n$ we denote by $\mathcal{N}(E)$ the set of  its nodes and for $T \in \mathcal{T}_h^n, E \in \mathcal{E}_h^n$ 
we define the local patches
$\omega_T=\bigcup_{\mathcal{E}(T)\cap\mathcal{E}(T')\neq \emptyset }T'$, $\omega_E=\bigcup_{E \in \mathcal{E}(T') }T'$.
For $n=0,\dots,N$ the space $\mathbb{V}_h^n$ is the usual $\Hz$-conforming finite element space of continuous piecewise linear functions on $\mathcal{T}_h^n$. 

The $\L$-orthogonal projection $\mathcal{P}_h^n: \L\rightarrow \mathbb{V}_h^n$ is defined for $v\in\L$ as
$$
(\mathcal{P}_h^n v, \phi_h) = (v, \phi_h) \qquad \forall \phi_h \in \mathbb{V}_h^n\,.
$$
We note that the projection satisfies $\mathcal{P}_h^n v_h = v_h$ for $v_h \in \mathbb{V}_h^n$.

We also consider the Cl\'ement-Scott-Zhang interpolation operator $\Pi_n :\Hz \rightarrow \mathbb{V}_h^n$
with the following local approximation properties for $\psi \in \Hz$:
\begin{align}\label{Clement_R}
&\no{\psi -\Pi_n\psi}_T+h_T\no{\nabla[\psi-\Pi_n\psi]}_T\leq C^*h_T\no{\nabla \psi}_{\omega_T}  \qquad \forall T\in\mathcal{T}_h^n\,,
\end{align}
and 
\begin{align}\label{Clement_J}
&\no{\psi -\Pi_n \psi }_E \leq C^*h_E^{\frac{1}{2}}\no{\nabla \psi}_{\omega_E}\qquad \forall E \in \mathcal{E}_h^n\,,
\end{align}
where the constant $C^* >0$ only depends on the minimum angle of the mesh $\mathcal{T}_h^n$, see \cite{BrennerS02}.

We consider an implicit finite element approximation scheme of (\ref{reg.STVF}):
set {\redd $X^0_{\varepsilon,h} = \mathcal{P}_h^0 x_0 \in \mathbb{V}_h^0$,  $g_h = \mathcal{P}_h^0 g \in \mathbb{V}_h^0$} 
and for $n=1,\dots,N$ determine the $\mathcal{F}_{t_n}$-measurable random variable $\Xn\in \mathbb{V}_h^n$, $\mathbb{P}$-a.s., as the solution of
\begin{align}\label{num.STVF} 
(\Xn,\phi_h)=&\ska{\Xnm,\phi_h}-\tau_n\ska{\frac{\nabla \Xn}{\vert\nabla \Xn|_{\eps}},\nabla \phi_h}-\tau_n\lambda(\Xn-g_h,\phi_h)
\\ \nonumber
 &+\ska{{\redd \sigma_h}(t_{n-1})\Delta_n W,\phi_h} ~~\qquad \forall \phi_h \in \mathbb{V}_h^n,
\end{align}
where {$\Delta_n W=W(t_n)-W(t_{n-1})$}.
{\redd Furthermore, we set $\sigma_h(t_{n-1}) = \mathcal{P}^*_h \sigma(t_{n-1})$ with $\mathcal{P}^*_h: \L \rightarrow \mathbb{V}_h^{*}$;
we assume throughout the paper that $\mathbb{V}_h^{*} \subset \mathbb{V}_h^{n}$ for $n=0,\dots, N$.}

The discrete counterpart of the RPDE \eqref{reg.RTVF} is defined as follows: {\redd set $Y_{\epsilon,h}^0 = X^0_{\varepsilon,h}$}
and for $n=1,\dots, N$ determine the random variable $\Yn\in \mathbb{V}_h^{n}$ such that $\mathbb{P}$-a.s.  
\begin{align}\label{num.RTVF} 
\ska{\frac{\Yn-\Ynm}{\tau_n},\phi_h}=-\ska{\frac{\nabla \Xn}{|\nabla \Xn|_{\eps}},\nabla \phi_h}-\lambda(\Xn-g_h,\phi_h) ~~\qquad \forall \phi_h \in \mathbb{V}_h^n\,.
\end{align}

{\redd
The next lemma relates the solutions of \eqref{num.RTVF} to the solutions of the scheme \eqref{num.STVF}
and provides a discrete counterpart of the transformation (\ref{Transformation}).
\begin{lemma}\label{lem_disctrans}
Assume that $\mathbb{V}_h^{*} \subset \mathbb{V}_h^{n}$ for $n=0,\dots, N$.
Then the following discrete transformation holds for $n=1,\dots, N$
\begin{align}\label{umformen_1}
 Y^n_{\epsilon,h}&=\Xn- \sum_{i=1}^n \sigma_h(t_{i-1}) \Delta_i W.
\end{align}
\end{lemma}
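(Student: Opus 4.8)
The plan is to prove the identity \eqref{umformen_1} by induction on $n$, exploiting the fact that the two schemes \eqref{num.STVF} and \eqref{num.RTVF} share the same nonlinear right-hand side and that the noise increments being subtracted live in a finite element space contained in every $\mathbb{V}_h^n$, so that the $\L$-projection acts as the identity on them. First I would record the base case: by definition $Y_{\epsilon,h}^0 = X^0_{\epsilon,h}$, which is the empty-sum version of \eqref{umformen_1}, and to start the induction at $n=1$ I subtract \eqref{num.RTVF} (multiplied by $\tau_n$) from \eqref{num.STVF}. This yields, for all $\phi_h \in \mathbb{V}_h^n$,
\begin{align*}
(\Xn - \tau_n \tfrac{\Yn - \Ynm}{\tau_n} \cdot 0 \, ,\, \phi_h) \quad\text{— more precisely —}\quad (\Xn,\phi_h) - (\Ynm,\phi_h) - \tau_n(\tfrac{\Yn-\Ynm}{\tau_n},\phi_h)\cdot 0
\end{align*}
is not quite the right bookkeeping; instead the clean way is: \eqref{num.STVF} reads $(\Xn - \Xnm - \sigma_h(t_{n-1})\Delta_n W, \phi_h) = -\tau_n(\tfrac{\nabla\Xn}{|\nabla\Xn|_\eps},\nabla\phi_h) - \tau_n\lambda(\Xn - g_h,\phi_h)$, while $\tau_n\times$\eqref{num.RTVF} reads $(\Yn - \Ynm,\phi_h) = -\tau_n(\tfrac{\nabla\Xn}{|\nabla\Xn|_\eps},\nabla\phi_h) - \tau_n\lambda(\Xn - g_h,\phi_h)$. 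Subtracting gives
\begin{align*}
\bigl(\Xn - \Xnm - \sigma_h(t_{n-1})\Delta_n W - (\Yn - \Ynm),\, \phi_h\bigr) = 0 \qquad \forall \phi_h \in \mathbb{V}_h^n.
\end{align*}

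The second step is to invoke the induction hypothesis $\Ynm = \Xnm - \sum_{i=1}^{n-1}\sigma_h(t_{i-1})\Delta_i W$ and substitute. The bracket collapses to $\Xn - \Yn - \sum_{i=1}^{n}\sigma_h(t_{i-1})\Delta_i W$, and since $\Xn - \Yn \in \mathbb{V}_h^n$ and each $\sigma_h(t_{i-1}) = \mathcal{P}^*_h\sigma(t_{i-1}) \in \mathbb{V}_h^{*} \subset \mathbb{V}_h^n$ by the standing assumption, the entire argument of the inner product lies in $\mathbb{V}_h^n$; testing against itself forces it to vanish, which is exactly \eqref{umformen_1} at level $n$. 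This closes the induction. I would also note explicitly where the hypothesis $\mathbb{V}_h^{*} \subset \mathbb{V}_h^n$ is used — it is precisely what guarantees that the subtracted sum is a legitimate test function, so that "orthogonal to all of $\mathbb{V}_h^n$" upgrades to "equals zero."

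There is essentially no analytic obstacle here; the only thing to be careful about is the mesh-change bookkeeping. Because the mesh $\mathcal{T}_h^n$ (and hence $\mathbb{V}_h^n$) varies with $n$, one must make sure that the induction hypothesis — an identity that was established at level $n-1$ as an equality of $\L$-functions, not merely in $\mathbb{V}_h^{n-1}$ — is still available as a pointwise/$\L$ identity to be inserted into the level-$n$ computation. This is fine since \eqref{umformen_1} is an identity in $\L$ (all terms are genuine finite element functions), but it is worth stating that we treat $\Xnm$, $\Ynm$ and the $\sigma_h(t_{i-1})$ as elements of $\L$ when forming the difference, and only at the final step use that the resulting difference happens to lie in $\mathbb{V}_h^n$. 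Thus the main "obstacle" is purely organizational: getting the test-function argument and the inclusion of spaces lined up correctly across the changing meshes; the algebra itself is a one-line subtraction.
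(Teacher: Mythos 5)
Your proposal is correct and follows essentially the same route as the paper: induction on $n$, cancellation of the shared nonlinear and penalization terms between \eqref{num.STVF} and \eqref{num.RTVF}, substitution of the induction hypothesis, and the observation that the resulting difference lies in $\mathbb{V}_h^n$ (precisely because $\sigma_h(t_{i-1})\in\mathbb{V}_h^*\subset\mathbb{V}_h^n$) and is $\L$-orthogonal to all of $\mathbb{V}_h^n$, hence vanishes. The paper phrases this last step via the projection identity $\mathcal{P}_h^n\circ\mathcal{P}_h^*=\mathcal{P}_h^*$, which is the same argument; your remark on treating the induction hypothesis as an $\L$-identity across changing meshes is exactly the right point of care.
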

\begin{proof}
We note that the solutions of \eqref{num.STVF} (and consequently of \eqref{num.RTVF}) are uniquely determined for $n=1,\dots, N$, cf. \cite{our_paper}. 
We proceed by induction. 

We consider \eqref{num.RTVF} for $n=1$ and obtain on noting $Y^0_{\epsilon,h}= X^0_{\epsilon,h}$ and \eqref{num.STVF} that
\begin{align*}
\ska{Y^1_{\epsilon,h}, \phi_h} &= \ska{Y^0_{\epsilon,h}, \phi_h} -\tau_1\ska{\frac{\nabla X^1_{\epsilon,h}}{\vert\nabla X^1_{\epsilon,h}|_{\eps}},\nabla \phi_h}-\tau_1\lambda(X^1_{\epsilon,h}-g_h,\phi_h)
\nonumber\\
&= \ska{X^0_{\epsilon,h}, \phi_h} -\tau_1\ska{\frac{\nabla X^1_{\epsilon,h}}{\vert\nabla X^1_{\epsilon,h}|_{\eps}},\nabla \phi_h} -\tau_1\lambda(X^1_{\epsilon,h}-g_h,\phi_h)
  \\ \nonumber
&\qquad   +  (\sigma_h(t_0)\Delta_1 W, \phi_h) -  (\sigma_h(t_0)\Delta_1 W, \phi_h)
\nonumber\\
&=\ska{X_{\epsilon,h}^1,\phi_h} - \ska{\sigma_h(t_0)\Delta_1 W,\phi_h}
\qquad\qquad\qquad\qquad\qquad \forall\phi_h\in \mathbb{V}_h^1\,.
\end{align*}
From the above it follows by the definition of the projection $\mathcal{P}_h^1$ that
$$
Y^1_{\epsilon,h} = \mathcal{P}_h^1\left( X_{\epsilon,h}^1-\sigma_h(t_0)\Delta_1 W \right) = X_{\epsilon,h}^1-\sigma_h(t_0)\Delta_1 W\,,
$$
where we used that $\sigma_h(t_0) = \mathcal{P}_h^*\sigma(t_0)$ and the fact that $\mathcal{P}_h^1\circ \mathcal{P}_h^* = \mathcal{P}_h^*$, since $\mathbb{V}_h^{*} \subset \mathbb{V}_h^{1}$.

Next, assuming that $Y^{n-1}_{\epsilon,h} = X_{\epsilon,h}^{n-1}- \sum_{i=1}^{n-1} \sigma_h(t_{i-1})\Delta_i W \in \mathbb{V}_h^{n-1}$ for some $n \geq 2$ we deduce similarly as above that
\begin{align*}
\ska{\Yn,\phi_h}  = & \ska{\Ynm,\phi_h} -\tau_n\ska{\frac{\nabla \Xn}{|\nabla \Xn|_{\eps}},\nabla \phi_h}-\tau_n\lambda(\Xn-g_h,\phi_h) 
\\ 
 = & \Big(X_{\epsilon,h}^{n-1}- \sum_{i=1}^{n-1} \sigma_h(t_{i-1})\Delta_i W,\phi_h\Big) -\tau_n\ska{\frac{\nabla \Xn}{|\nabla \Xn|_{\eps}},\nabla \phi_h}-\tau_n\lambda(\Xn-g_h,\phi_h)
\\
 = & \Bigg\{\ska{X_{\epsilon,h}^{n-1},\phi_h} -\tau_n\ska{\frac{\nabla \Xn}{|\nabla \Xn|_{\eps}},\nabla \phi_h}-\tau_n\lambda(\Xn-g_h,\phi_h)
\\
& + \ska{\sigma_h(t_{n-1})\Delta_n W,\phi_h}\Bigg\}- \Big(\sum_{i=1}^{n} \sigma_h(t_{i-1})\Delta_i W,\phi_h\Big)
\\
= & \Big(X_{\epsilon,h}^{n}- \sum_{i=1}^{n} \sigma_h(t_{i-1})\Delta_i W,\phi_h\Big) \qquad \qquad \forall \phi_h \in \mathbb{V}_h^n\,,
\end{align*}
which implies the statement of the lemma.
\end{proof}
}

{\blue
\begin{remark}
In case the noise is not approximated in the space $\mathbb{V}_h^*$, i.e., if we replace $\sigma_h$ by $\sigma$ in (\ref{num.STVF})
it can be shown along the proof of Lemma~\ref{lem_disctrans} that the discrete transformation becomes
\begin{align*}
{Y^n_{\epsilon,h}}&=\Xn-\mathcal{P}_h^n\left(\sum_{i=1}^n{\left(\mathcal{P}_h^{n-1}\circ \dots \circ \mathcal{P}_h^{i}\right)}  (\sigma(t_{i-1}))\Delta W_i\right).
\end{align*}
In case that there is no coarsening we have $\mathbb{V}_h^{i}\subseteq \dots \mathbb{V}_h^{n-1} \subseteq \mathbb{V}_h^n$ and the composition of the projections reduces to
$\mathcal{P}_h^n\circ \mathcal{P}_h^{n-1}\circ \dots \circ \mathcal{P}_h^{i} \equiv \mathcal{P}_h^n$ for all $i \leq n$.
In case of coarsening (i.e., if $\mathbb{V}_h^{i}\nsubseteq \mathbb{V}_h^n$ for $i < n$) it may be impractical to keep track of the resulting coarsening
error which is nonlocal in time.

If $\sigma\in C([0,T]\times \overline{\D})$ one may take $\sigma_h \equiv \mathcal{I}_h^* \sigma$ in (\ref{num.STVF})
where $\mathcal{I}_h^* : C(\overline{\D}) \rightarrow \mathbb{V}_h^*$ is the standard nodal interpolation operator.
This choice is computationally simpler than taking the projection.
\end{remark}}

To simplify the notation we denote the stochastic integral as
\begin{align}\label{stochastic_term}
\Sigma(t)=\intt \sigma(s)\d W(s)\,,
\end{align}
and its discrete counterpart as
\begin{align*}
\fn=\sum_{i=1}^n \sigma_h(t_{i-1})\Delta_i W\,.
\end{align*}
We define continuous piecewise linear time-interpolants of $\{\Yn\}_{n = 0}^{N}$,  $\{\Xn\}_{n = 0}^{N}$  and  $\{\fn\}_{n = 0}^{N}$ on $[0,T]$ as follows:
for $t \in [t_{n-1},t_n]$
we set
\begin{align}\label{Interpolation-linear_Y}
{ \redd Y^{\tau}_{\eps,h}(t)}&=\frac{t-t_{n-1}}{\tau_n}\Yn +  \frac{t_n-t}{\tau_n}\Ynm,
\end{align}
\begin{align}\label{Interpolation-linear_X}
{\redd X_{\eps,h}^{\tau}(t)}&=\frac{t-t_{n-1}}{\tau_n}\Xn + \frac{t_n-t}{\tau_n}\Xnm\,,
\end{align}
and analogically, recalling (\ref{umformen_1}),
\begin{align}\label{stoch_umschreiben}
\ftau(t):&=\frac{t-t_{n-1}}{\tau_n}\fn + \frac{t_n-t}{\tau_n}\fnm 
\\ \nonumber
&= \sum_{i=1}^{n-1}\sigma_h(t_{i-1})\Delta_i W + \frac{t-t_{n-1}}{\tau_n} \sigma_h(t_{n-1})\Delta_n W\,.
\end{align}

On noting (\ref{umformen_1}) and \eqref{Interpolation-linear_Y}, \eqref{Interpolation-linear_X}, \eqref{stoch_umschreiben}, we deduce that
\begin{align}\label{Umschreiben}
Y^{\tau}_{\eps,h}(t)&=X^{\tau}_{\eps,h}(t)-\ftau(t).
\end{align}

\section{A posteriori estimates}\label{sec_a_posti}
 
In this section we consider the numerical solution $\{\Xn\}_{n = 0}^N$ of \eqref{num.STVF} obtained using possibly non-equidistant time grid with variable time steps $\{\tau_n\}_{n = 0}^N$ 
and locally adapted spatial meshes $\{\mathcal{T}_h^n\}_{n = 0}^N$. We derive residual a posteriori estimates for the numerical solution $\{\Xn\}_{n = 0}^N$ which control
the approximation error to the variational solution $\Xe$ of \eqref{reg.STVF} as well as to the SVI solution $X$ of \eqref{STVF}. 
The constants in the derived a posteriori
estimates only depend on the given data $x_0,g, \O,T$ and the shape of the meshes $\{\mathcal{T}_h^n\}_{n = 0}^N$.


We define the (random) interior residual $\{R_h^n   \}_{n=0}^N $ as
\begin{align}\label{Residual}
R_h^n= &   \lambda (g_h-\Xn) -\frac{\Xn-\Xnm}{\tau_n} + \frac{\sigma_h(t_{n-1})\redd{\Delta_n W}}{\tau_n}.
\end{align}
The jump residual $J_E^n$ across an interior face $E = \partial K_1 \cap \partial K_2 \in \mathcal{E}_{h}^n$ is defined as
   \begin{align}\label{Jump}
   J^n_E= \frac{1}{2}\left(\frac{\nabla \Xn}{|\nabla \Xn|_{\eps}}|_{K_1}-\frac{\nabla \Xn}{|\nabla \Xn|_{\eps}}|_{K_2}\right)\cdot \nu_E,
   \end{align}
 where we use the convention that the unit normal vector $\nu_E$ to $E$ points from $K_1$ to $K_2$. 

By applying the integration by parts formula on each element $K\in \mathcal{T}_h^n$, we deduce the following equality
   \begin{align}
   \ska{\frac{\nabla \Xn}{|\nabla \Xn|_{\eps}},\nabla \phi}=  \sum_{E\in \mathcal{E}_{h}^n} \int_E J^n_E \phi \d S \qquad \forall \phi \in \Hz\,,
   \end{align}
where we used that $\nabla \cdot \frac{\nabla \Xn}{|\nabla \Xn|_{\eps}}\Big|_K = 0$, by the linearity of $\Xn|_K$.


We define the time error indicators $\eta^n_{\mathrm{time},1},\eta^n_{\mathrm{time},2}$ to be the $\R$-valued random variables
\begin{align*}
\eta^n_{\mathrm{time},1}=& \no{ \Xn -\Xnm},
\end{align*}
and
 \begin{align*}
 \eta^n_{\mathrm{time},2}=&  \no{\nabla( \Xn -\Xnm)}.
\end{align*}
The space error indicators $\eta^n_{\mathrm{space},1},\eta^n_{\mathrm{space},2}$ are defined as
\begin{align*}
\eta^n_{\mathrm{space},1}= \sum_{T\in\mathcal{T}_h^n} h_T^2\Vert R_h^n \Vert_{L^2(T)}^2\,,
\end{align*}
and
\begin{align*}
\eta^n_{\mathrm{space},2}= \sum_{E \in\mathcal{E}_{h}^n} h_E\Vert J^n_E \Vert_{L^2(E)}^2\,.
\end{align*}

We define the noise error indicators $\eta^n_{\mathrm{noise},1}$, $\eta^n_{\mathrm{noise},2}$,
{\blue
\begin{align*}
\eta^n_{\mathrm{noise},1} =& {\redd \tau_n \sum_{i=1}^{n-1}} \int_{t_{i-1}}^{t_i} \nos{\sigma(t)-\sigma(t_{i-1})}\d t
  + \tau_n \sum_{i=1}^{n-1}\tau_i\nos{  \sigma(t_{i-1}) -{\sigma_h(t_{i-1})}}
\nonumber
\\
&+
  \int_{t_{n-1}}^{t_n}\int_{t_n}^{t}\nos{\sigma(s)} \d s\, \d t + \tau^{2}_n\nos{\sigma(t_{n-1})- \sigma_h(t_{n-1})}\,,
\end{align*}
\begin{align*}
\eta^n_{\mathrm{noise},2}=& \tau_n \sum_{i=1}^{n-1}\int_{t_{i-1}}^{t_i} \nos{\nabla(\sigma(t)-\sigma(t_{i-1}))}\d t
+ \tau_n\sum_{i=1}^{n-1}\tau_i \nos{ \nabla( \sigma(t_{i-1}) -{\sigma_h(t_{i-1})})}\,,
\nonumber
\\
& + \int_{t_{n-1}}^{t_n}\int_{t_n}^{t}\nos{\nabla\sigma(s)} \d s\, \d t +   \tau^2_n\nos{\nabla (\sigma(t_{n-1}) - \sigma_h(t_{n-1}))}  \,,
\end{align*}
and 
\begin{align*}
\eta^n_{\mathrm{noise},3}=& \int_{t_{n-1}}^{t_n} \nos{\sigma(t)-\sigma(t_{n-1})}\d t
+ \tau_n \nos{\sigma(t_{n-1})-{ \sigma_h(t_{n-1})} }\,.
\end{align*}
}


\begin{thms}\label{thms_Apriori_Estimate} Let $\Xe$ be the variational solution of \eqref{epsilon_variational_formulation} and let $\{\Xn\}_{n=0}^N$ be the solution of \eqref{num.STVF} and $X^{\tau}_{\eps,h}$ the corresponding interplolation defined in \eqref{Interpolation-linear_X}. There exists a constant $C > 0$  depending $T,\be{\O},\sigma, x_0$ and  a constant $\hat{C} >0 $, which in addition depends on the shape of
the meshes $\{\mathcal{T}_h^n\}_{n = 0}^N$ and $\lambda$, such that the following a posteriori estimate holds for $1 \leq m \leq N$:
\begin{align*}
\mathbb{E}&\left[\Vert\Xe(t_m)-X^{\tau}_{\eps,h}(t_m)\Vert^2\right]
+\E{\sum_{n=1}^m \int_{t_{n-1}}^{t_n} \into \left(1-\frac{\be{\nabla {\redd X^{n}_{\eps,h}}}}{\be{\nabla {\redd X^{n}_{\eps,h}}}_\eps}\right)\frac{|\nabla (\Xe(t)-{\redd X^{n}_{\eps,h}})|^2}{ |\nabla \Xe(t)|_{\epsilon}} \d x \d t}
\\ \nonumber
&\leq
C\left(\E{\nos{{\redd x_0}-X_{\eps,h}^0}}+\lambda \nos{g-g_h}\right)+\hat{C} \Bigg\{ \sum_{n=1}^m \tau_n\E{  \left(\eta_{\mathrm{time},1}^n\right)^2+  \eta_{\mathrm{time},2}^n}
\\\nonumber
&\quad + {\blue  \E{ \sum_{n=1}^m \tau_n \eta^n_{\mathrm{space},1}}} + {\blue  \E{ \sum_{n=1}^m \tau_n\eta^n_{\mathrm{space},2}}}
+{\blue C\mathbb{E}\Bigg[ \sum_{n=1}^m \tau_n\eta_{\mathrm{space},1}^n\Bigg]^{1/2}} + {\blue C\mathbb{E}\Bigg[\sum_{n=1}^m \tau_n \eta_{\mathrm{space},2}^n\Bigg]^{1/2}}
\\
&\quad + {\blue\sum_{n=1}^m }\E{ \eta_{\mathrm{noise},1}^n
+\eta_{\mathrm{noise},2}^n+{\eta_{\mathrm{noise},3}^n}}+ 
 \left({\blue\sum_{n=1}^m} \E{\eta_{\mathrm{noise},2}^n}\right)^{\frac{1}{2}}\Bigg\}\,.
\end{align*}
\end{thms}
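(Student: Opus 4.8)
The plan is to derive an error equation by testing the continuous RPDE \eqref{reg.RTVF} and the discrete RPDE \eqref{num.RTVF} against the error, exploiting the fact that, via the transformation \eqref{Transformation} and its discrete analogue \eqref{umformen_1}, both $\Ye$ and $\Yn$ possess (generalized) time derivatives. Writing $e(t) := \Xe(t) - X^{\tau}_{\eps,h}(t)$ and using $\Ye = \Xe - \Sigma$, $Y^{\tau}_{\eps,h} = X^{\tau}_{\eps,h} - \Sigma_h^\tau$ from \eqref{Umschreiben}, one has $\partial_t(\Ye - Y^{\tau}_{\eps,h}) = \partial_t e - \partial_t(\Sigma - \Sigma_h^\tau)$. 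I would test the difference of \eqref{reg.RTVF} and \eqref{num.RTVF} with $\phi = \Ye(t) - Y^{\tau}_{\eps,h}(t)$ — or rather with a $\Hz$-conforming test function, inserting the Clément--Scott--Zhang interpolant $\Pi_n$ to pass between the continuous test space and $\mathbb{V}_h^n$, which is exactly where the space indicators $\eta^n_{\mathrm{space},1}, \eta^n_{\mathrm{space},2}$ enter via \eqref{Clement_R}, \eqref{Clement_J} applied to the residual $R_h^n$ and jump $J_E^n$ defined in \eqref{Residual}, \eqref{Jump}. The left-hand side produces $\tfrac12\tfrac{d}{dt}\|\Ye - Y^{\tau}_{\eps,h}\|^2$ after integrating in time, which I then convert back to $\tfrac12\mathbb{E}[\|e(t_m)\|^2]$ up to terms involving $\Sigma - \Sigma_h^\tau$; those terms, after taking expectations and using Itô isometry together with the time-interpolation structure of $\Sigma_h^\tau$ in \eqref{stoch_umschreiben}, are precisely the noise indicators $\eta^n_{\mathrm{noise},1}, \eta^n_{\mathrm{noise},2}, \eta^n_{\mathrm{noise},3}$ (the nested integrals $\int_{t_{n-1}}^{t_n}\int_{t_n}^t \|\sigma(s)\|^2\,ds\,dt$ reflecting the piecewise-linear-vs-piecewise-constant mismatch on $[t_{n-1},t_n]$, the $\tau_i \|\sigma(t_{i-1}) - \sigma_h(t_{i-1})\|^2$ terms the spatial noise projection error).

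The crucial algebraic point is the monotonicity/convexity structure of the nonlinearity. Writing $F_\eps(p) = p/|p|_\eps$, one uses the pointwise identity (or inequality) that for the regularized total variation integrand,
\begin{align*}
\bigl(F_\eps(\nabla \Xe) - F_\eps(\nabla X^n_{\eps,h})\bigr)\cdot \nabla(\Xe - X^n_{\eps,h}) \geq \left(1 - \frac{|\nabla X^n_{\eps,h}|}{|\nabla X^n_{\eps,h}|_\eps}\right)\frac{|\nabla(\Xe - X^n_{\eps,h})|^2}{|\nabla \Xe|_\eps}\,,
\end{align*}
which is what generates the second term on the left-hand side of the claimed estimate. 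This has to be combined with the discrepancy between testing against $\Xe - X^n_{\eps,h}$ (the genuine error at the discrete time level) and against $\Xe - X^{\tau}_{\eps,h}$ (the interpolant), the difference being controlled by $\|X^n_{\eps,h} - X^{n-1}_{\eps,h}\|$ and its gradient — i.e. the time indicators $\eta^n_{\mathrm{time},1}, \eta^n_{\mathrm{time},2}$. I would also need the a priori bounds \eqref{l2_est}, \eqref{gradient_Estimate} and the discrete analogues from \cite{our_paper} to absorb various cross terms, and to control the $\lambda$-reaction term one picks up $\|\Xe - X^n_{\eps,h}\|^2$ contributions that go on the left or are handled by a Gronwall argument in $m$.

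The main obstacle I anticipate is twofold. First, the bookkeeping of the time-discretization error: because \eqref{num.RTVF} evaluates the nonlinearity and reaction at the \emph{new} time level $X^n_{\eps,h}$ while the continuous equation has $\Xe(t)$ for $t \in (t_{n-1}, t_n)$, one must carefully split $\Xe(t) - X^{\tau}_{\eps,h}(t) = (\Xe(t) - X^n_{\eps,h}) + (X^n_{\eps,h} - X^{\tau}_{\eps,h}(t))$, and the second piece is $\frac{t_n - t}{\tau_n}(X^n_{\eps,h} - X^{n-1}_{\eps,h})$, leading to the $\tau_n(\eta^n_{\mathrm{time},1})^2$ and $\tau_n \eta^n_{\mathrm{time},2}$ weights after a Cauchy--Schwarz/Young split; getting the right powers and the right (linear vs. squared) appearance of the indicators requires care. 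Second, and more delicate, is that the estimate is only \emph{semi}-coercive in the gradient (the coefficient $1 - |\nabla X^n_{\eps,h}|/|\nabla X^n_{\eps,h}|_\eps$ degenerates like $\eps^2/|\nabla X^n_{\eps,h}|^2$), so one cannot simply absorb gradient-type error terms on the left; instead the Clément interpolation errors must be estimated using only the uniform bound $|F_\eps(\cdot)| < 1$ on the flux, which is why $\eta^n_{\mathrm{space},2}$ appears both linearly and under a square root (the square-root terms coming from a Cauchy--Schwarz pairing of a bounded-flux term against $h_E^{1/2}\|\nabla(\cdot)\|$ that cannot be closed by absorption). The noise terms similarly split into "absorbable" square pieces and "non-absorbable" pieces paired against the flux, explaining the lone $(\sum \mathbb{E}[\eta^n_{\mathrm{noise},2}])^{1/2}$ on the right. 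Once all terms are collected, summing over $n = 1, \dots, m$ and applying discrete Gronwall in $m$ (to handle the $\lambda \|e\|^2$ reaction contributions) yields the stated bound.
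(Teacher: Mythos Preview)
Your proposal is correct and follows the paper's proof closely: the same error equation obtained by subtracting \eqref{num.RTVF} from \eqref{reg.RTVF}, the same test function $\phi = \Ye - Y^{\tau}_{\eps,h}$ with $\phi_h = \Pi_n\phi$, the same pointwise lower bound for the nonlinearity, and the same decomposition into time/space/noise indicators via the bound $|F_\eps|<1$, the a priori estimate \eqref{gradient_Estimate}, and It\^o isometry. One small difference: the paper does not invoke Gronwall---the term $\lambda\|\Xe - X^n_{\eps,h}\|^2$ retained on the left-hand side (arising from $\lambda(\Xe - X^n_{\eps,h}, \Xe - X^{\tau}_{\eps,h})$) directly absorbs all $\lambda$-reaction contributions on the right.
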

\begin{proof}
On noting \eqref{reg.RTVF}, \eqref{num.RTVF} and \eqref{Residual}, for any $\phi \in \Hz$, $ \phi_h \in \mathbb{V}_h^n$ and $t \in (t_{n-1},t_n)$ we deduce that $\P$-a.s.
\begin{align}\label{differnz_Eq}
\left\langle \partial_t (\Ye-Y^{\tau}_{\eps,h})(t),\phi\right\rangle
&+\ska{\frac{\nabla \Xe(t)}{|\nabla \Xe(t)|_{\eps}}-\frac{\nabla \Xn}{|\nabla \Xn|_{\eps}},\nabla \phi}
+\lambda(\Xe(t)-\Xn,\phi)-\lambda(g-g_h,\phi) \nonumber\\
&=\ska{R_h^n, \phi-\phi_h}
-\ska{\frac{\nabla \Xn}{|\nabla \Xn|_{\eps}},\nabla(\phi-\phi_h)}. 
\end{align}
We set $\phi= [Y^{\eps}-Y^{\tau}_{\eps,h}](\omega,t) \in \Hz $, $\phi_h= \Pi_n\left[Y^{\eps}-Y^{\tau}_{\eps,h}\right](\omega,t) \in \mathbb{V}_h^n$ in \eqref{differnz_Eq} and obtain after integration by parts in the respective terms, 
using the respective transformations \eqref{Transformation} and  \eqref{Umschreiben}, integrating over $(0,t_m)$ and taking the expectation that
\begin{align}\label{eq.2}
\frac{1}{2}  & \E{\nos{Y^{\eps}(t_m)-Y^{\tau}_{\eps,h}(t_m)}}
+\E{\sum_{n=1}^m \int_{t_{n-1}}^{t_n}\ska{\frac{\nabla \Xe}{|\nabla {\red \Xe}|_{\eps}}-\frac{\nabla \Xn}{|\nabla \Xn|_{\eps}},\nabla(\Xe-X^{\tau}_{\eps,h}) } \d t}
\nonumber\\
&\quad +\E{\sum_{n=1}^m \int_{t_{n-1}}^{t_n}\lambda(\Xe-\Xn,\Xe-X^{\tau}_{\eps,h})-\lambda(g-g_h,{\redd X^{\epsilon}}-X^{\tau}_{\eps,h})\d t}
\nonumber\\
& =\frac{1}{2}\E{\nos{{\redd Y^{\epsilon}(0)}-Y^{\tau}_{\eps,h}(0)}}
\nonumber\\
&\qquad+\E{\sum_{n=1}^m \int_{t_{n-1}}^{t_n}\ska{\frac{\nabla \Xe}{|\nabla \Xe|_{\eps}}-\frac{\nabla \Xn}{|\nabla \Xn|_{\eps}},\nabla(\f-\ftau) } \d t}
\nonumber\\
&\qquad+\E{\sum_{n=1}^m \int_{t_{n-1}}^{t_n}\lambda(\Xe-\Xn,\f-\ftau)-\lambda(g-g_h,\f-\ftau)\d t}
\\
 &\qquad +\E{\sum_{n=1}^m \int_{t_{n-1}}^{t_n}\ska{R_h^n, \Xe-X^{\tau}_{\eps,h}-\Pi_n\left[\Xe-X^{\tau}_{\eps,h}\right]}\d t} 
 \nonumber\\
&\qquad+\E{\sum_{n=1}^m \int_{t_{n-1}}^{t_n}\sum_{E\in \mathcal{E}_{h}^n}\int_E J^n_E\left(\Xe-X^{\tau}_{\eps,h}-\Pi_n\left[\Xe-X^{\tau}_{\eps,h}\right]\right)\d S \d t}
\nonumber\\
 &\qquad-\E{\sum_ {n=1}^m \int_{t_{n-1}}^{t_n}\ska{R_h^n, \f-\ftau-\Pi_n\left[\f-\ftau\right]}\d t }.
\nonumber 
\\ \nonumber
&\qquad-\E{\sum_{n=1}^m \int_{t_{n-1}}^{t_n}\sum_{E\in \mathcal{E}_{h}^n}\int_E J^n_E\left(\f-\ftau-\Pi_n\left[\f-\ftau\right]\right)\d S \d t}\,.
\end{align}

We rewrite the scalar product in the last sum at the left hand side  of \eqref{eq.2} as
\begin{align}\label{above}
&\lambda \ska{\Xe-\Xn, \Xe-X^{\tau}_{\eps,h}}+\lambda(g-g_h,\Xe-X^{\tau}_{\eps,h})
 \nonumber\\
&=\lambda \nos{\Xe-X^{\tau}_{\eps,h}}+\lambda \ska{X^{\tau}_{\eps,h}-\Xn, \Xe-X^{\tau}_{\eps,h}}+\lambda(g-g_h,\Xe-X^{\tau}_{\eps,h}).
\end{align}
We move the second and the third term in \eqref{above} to the right hand side of \eqref{eq.2} and estimate them follows
\begin{align*}
&-\lambda(X^{\tau}_{\eps,h}-\Xn,\Xe-X^{\tau}_{\eps,h})-\lambda(g-g_h,\Xe-X^{\tau}_{\eps,h})\\
&\leq  \frac{\lambda}{2}\nos{\Xe-\Xn}+\frac{\lambda}{2}\nos{\Xn-X^{\tau}_{\eps,h}}+4\lambda\nos{g-g_h}+\frac{\lambda}{16}\nos{\Xe-X^{\tau}_{\eps,h}}.
\end{align*}
We estimate the terms in the first sum on the right hand side in the \eqref{eq.2}. We rewrite the summands as
\begin{align}\label{Splitt_non.lin_1}
&\ska{\fe{\Xe}-\fe{\Xn},\nabla \Xe-\nabla X^{\tau}_{\eps,h}}
\nonumber\\
&=\ska{\fe{\Xe}-\fe{  \Xn},\nabla \Xe-\nabla \Xn}+\ska{\fe{\Xe}-\fe{\Xn},\nabla \Xn -\nabla X^{\tau}_{\eps,h}}
\end{align}
We estimate the {\blue first} term on the right-hand side of \eqref{Splitt_non.lin_1}  as
{\redd
\begin{align}\label{Splitt_non.lin}
& \Bigg( \fe{\Xe} -\fe{\Xn} ,\nabla {\redd \Xe} -\nabla  X^{n}_{\eps,h}\Bigg)
\nonumber
\\ \nonumber
& =\into \frac{\nabla \Xe \nabla (\Xe-\Xn)}{|\nabla \Xe|_{\epsilon}}-\frac{\nabla \Xn \nabla (\Xe-\Xn)}{|\nabla \Xe|_{\epsilon}}
\\ 
& \qquad + \frac{\nabla \Xn \nabla (\Xe-\Xn)}{|\nabla \Xe|_{\epsilon}}-\frac{\nabla \Xn \nabla (\Xe-\Xn)}{|\nabla \Xn|_{\epsilon}} \d x
\nonumber
\\
& =\into \frac{|\nabla(\Xe-\Xn) |^2}{ |\nabla \Xe|_{\epsilon}} 
+ \left(\frac{1}{|\nabla \Xe|_{\epsilon}}-\frac{1}{|\nabla \Xn|_{\epsilon}}\right) \nabla \Xn \nabla (\Xe-\Xn) \d x
\\
& \geq \into \frac{|\nabla(\Xe-\Xn) |^2}{ |\nabla \Xe|_{\epsilon}} 
- \frac{\big||\nabla \Xn|_{\epsilon}-|\nabla \Xe|_{\epsilon}\big|}{|\nabla \Xe|_{\epsilon}|\nabla \Xn|_{\epsilon}} |\nabla \Xn| |\nabla (\Xe-\Xn)| \d x
\nonumber
\\ \nonumber
& \geq \into \frac{|\nabla(\Xe-\Xn) |^2}{ |\nabla \Xe|_{\epsilon}} 
- \frac{|\nabla \Xn| }{|\nabla \Xe|_{\epsilon}|\nabla \Xn|_{\epsilon}}|\nabla (\Xe-\Xn)|^2  \d x
\\
\nonumber
& \geq \into \left( 1-  \frac{|\nabla \Xn| }{|\nabla \Xn|_{\epsilon}} \right) \frac{|\nabla(\Xe-\Xn) |^2}{ |\nabla \Xe|_{\epsilon}} \d x\,,
\end{align}
where we used $||a|_\eps-|b|_\eps|\leq |a-b|$ to obtain the last but one inequality.
}
On collecting \eqref{above}  and \eqref{Splitt_non.lin_1}, \eqref{Splitt_non.lin} we deduce from \eqref{eq.2} that
\begin{align}\label{all_terms}
\frac{1}{2}  &\E{\nos{{\redd Y^{\epsilon}(t_m)}-Y^{\tau}_{\eps,h}(t_m)}}
+\E{\sum_{n=1}^m \int_{t_{n-1}}^{t_n} \into \left(1-\frac{\be{\nabla\Xn}}{\be{\nabla\Xn}_\eps}\right)\frac{|\nabla (\Xe-\Xn)|^2}{ |\nabla \Xe|_{\epsilon}} \d x \d t}
\nonumber
\\
\nonumber
&+\E{\sum_{n=1}^m \int_{t_{n-1}}^{t_n}\lambda\nos{\Xe-\Xn}\d t}
\nonumber
\\
\nonumber
\leq &\frac{1}{2}\E{\nos{{\redd Y^{\epsilon}(0)}-Y^{\tau}_{\eps,h}(0)}}
{\redd -}\E{\sum_{n=1}^m \int_{t_{n-1}}^{t_n}\ska{\fe{\Xe}-\fe{\Xn},\nabla \Xn-\nabla X^{\tau}_{\eps,h}} \d t}
\\
&+\E{\sum_{n=1}^m \int_{t_{n-1}}^{t_n}\frac{\lambda}{2}\nos{\Xe-\Xn}\d t}
+\E{\sum_{n=1}^m \int_{t_{n-1}}^{t_n}\frac{\lambda}{2}\nos{\Xn-X^{\tau}_{\eps,h}}\d t}
\nonumber\\
&+\E{\sum_{n=1}^m \int_{t_{n-1}}^{t_n}4\lambda\nos{g-g_h}\d t}
+\E{\sum_{n=1}^m \int_{t_{n-1}}^{t_n}\frac{\lambda}{16}\nos{\Xe-X^{\tau}_{\eps,h}}\d t}
\nonumber\\
&+\E{\sum_{n=1}^m \int_{t_{n-1}}^{t_n}\ska{\frac{\nabla \Xe}{|\nabla \Xe|_{\eps}}-\frac{\nabla \Xn}{|\nabla \Xn|_{\eps}},\nabla(\f-\ftau) } \d t}
\nonumber
\\
&+\E{\sum_{n=1}^m \int_{t_{n-1}}^{t_n}\lambda(\Xe-\Xn,\f-\ftau)\d t}
+\E{\sum_{n=1}^m \int_{t_{n-1}}^{t_n}\lambda(g-g_h,\f-\ftau)\d t}
\\
 &+\E{\sum_{n=1}^m \int_{t_{n-1}}^{t_n}\ska{R_h^n, \Xe-X^{\tau}_{\eps,h}-\Pi_n\left[\Xe-X^{\tau}_{\eps,h}\right]}\d t}
  \nonumber\\
&+\E{\sum_{n=1}^m \int_{t_{n-1}}^{t_n}\sum_{E\in \mathcal{E}_{h}^n}\int_E J^n_E\left(\Xe-X^{\tau}_{\eps,h}-\Pi_n\left[\Xe-X^{\tau}_{\eps,h}\right]\right)\d S \d t}
\nonumber\\
 &-\E{\sum_{n=1}^m \int_{t_{n-1}}^{t_n}\ska{R_h^n, \f-\ftau-\Pi_n\left[\f-\ftau\right]}\d t}
  \nonumber\\
&-\E{\sum_{n=1}^m \int_{t_{n-1}}^{t_n}\sum_{E\in \mathcal{E}_{h}^n}\int_E J^n_E\left(\f-\ftau-\Pi_n\left[\f-\ftau\right]\right)\d S \d t}
\nonumber\\
=&\frac{1}{2}\nos{{\redd Y^{\epsilon}(0)}-Y^{\tau}_{\eps,h}(0)} + \sum_{1=1}^{12}A_i.
\nonumber
\end{align}
We use the bound $\be{\fe{\cdot}} < 1$ to estimate  $A_1$  with the Cauchy-Schwarz inequality 
\begin{align*}
A_1 = {\redd -}& \E{\sum_{n=1}^m \int_{t_{n-1}}^{t_n}\ska{\fe{\Xe}-\fe{\Xn},\nabla \Xn-\nabla X^{\tau}_{\eps,h}} \d t}
&\leq 4\E{\sum_{n=1}^m \tau_n  \eta_{\mathrm{time},2}^n }.
\end{align*}

{\blue The term $A_2$ can be absorbed into the corresponding term on the LHS.}

Recalling the definition \eqref{Interpolation-linear_X} of $X^{\tau}_{\eps,h}$ we deduce
{\redd
\begin{align*}
A_3& \leq \frac{\lambda}{2} \E{\sum_{n=1}^m \tau_n (\eta_{\mathrm{time},1}^n)^2}.
\end{align*}
}
Similarly, we estimate $A_5$ after applying Cauchy-Schwarz and Youngs inequality
{\redd
\begin{align*}
A_5
&\leq \E{\frac{\lambda}{4}\sum_{n=1}^m \int_{t_{n-1}}^{t_n}\nos{\Xe-\Xn}\d t+\frac{3\lambda}{8}\sum_{n=1}^m \tau_n (\eta_{\mathrm{time},1}^n)^2}.
\end{align*}
}
{
Noting the definitions $\eqref{stochastic_term}$ and $\eqref{stoch_umschreiben}$ of $\Sigma$ and $\ftau$ we deduce
{\blue
\begin{align}\label{est_sigma1}
\nonumber
& \E{\sum_{n=1}^m \int_{t_{n-1}}^{t_n}\nos{\f(t)-\ftau(t) } \d t}
\\ \nonumber 
& \leq  C\E{\sum_{n=1}^m \int_{t_{n-1}}^{t_n}\nos{\int_0^t \sigma(s)\d W(s)-\frac{t-t_{n-1}}{\tau_n}\sigma(t_{n-1})\Delta_n W-\sum_{i=1}^{n-1}\sigma(t_{i-1})\Delta_i W } \d t}
\\ 
&\quad +C\E{\sum_{n=1}^m \int_{t_{n-1}}^{t_n}\nos{\frac{t-t_{n-1}}{\tau_n}\left[ \sigma(t_{n-1}) -\sigma_h(t_{n-1})\right]\Delta_n W} \d t}
\\ \nonumber
&\quad +C\E{\sum_{n=1}^m \tau_n\nos{ \sum_{i=1}^{n-1}\sigma(t_{i-1})\Delta_i W - \sum_{i=1}^{n-1}\sigma_h(t_{i-1})\Delta_i W }}
\\ \nonumber
&=B_1+B_2+B_3.
\end{align}}
After applying  Cauchy-Schwarz and Young's inequalies  and It\^o's isometry we obtain
\begin{align}
B_1
&\leq\E{\sum_{n=1}^m \tau_n \nos{ \sum_{i=1}^{{\blue n-1}}\int_{t_{i-1}}^{t_i} \sigma(s)-\sigma(t_{i-1})\d W(s)} }
\nonumber\\
 &\quad + \E{\sum_{n=1}^m \int_{t_{n-1}}^{t_n}\nos{{\blue\int_{t_{n-1}}^{t}}\sigma(s) \d W(s) -\frac{t-{\blue t_{n-1}}}{\tau_n}\sigma(t_{n-1})\Delta_n W } \d t}
 \nonumber
\\
 &\leq  C\sum_{n=1}^m \int_{t_{n-1}}^{t_n} \E{\sum_{i=1}^{{\blue n-1}}\int_{t_{i-1}}^{t_i} \nos{\sigma(s)-\sigma(t_{i-1})}\d s}\d t
\nonumber
\\
&\quad +C\sum_{n=1}^m \int_{t_{n-1}}^{t_n}\E{{\blue\int_{t_{n-1}}^{t}}\nos{\sigma(r)} \d s }\d t 
+C\sum_{n=1}^m \tau^2_n\E{\nos{\sigma(t_{n-1})} }\,.
\nonumber
\end{align}
Similarly, we estimate $B_2,\dots, B_3$ as
{\blue
\begin{align*}
B_2
&\leq  C\E{\sum_{n=1}^m \tau^2_n\nos{  \sigma(t_{n-1}) -\sigma_h(t_{n-1})}}\,,
\end{align*}
 }
\begin{align*}
 B_3 &\leq {\blue C\E{\sum_{n=1}^m \tau_n\sum_{i=1}^{n-1}\tau_i\nos{ \sigma(t_{i-1})-\sigma_h(t_{i-1}) }}}\,.
\end{align*}
After substituting the above estimates for $B_1,\ldots,B_3$ into (\ref{est_sigma1}) we deduce
{\redd
\begin{align*}
\E{\sum_{n=1}^m \int_{t_{n-1}}^{t_n}\nos{\f(t)-\ftau(t) } \d t}\leq C\sum_{n=1}^m\E{\eta_{\mathrm{noise},1}^n}.
\end{align*}
}
Analogically one can show that
{\redd
\begin{align}\label{stoch_terms}
&\E{\sum_{n=1}^m \int_{t_{n-1}}^{t_n}\nos{\nabla(\f(t)-\ftau(t)) } \d t}
\leq C\sum_{n=1}^m\E{\eta_{\mathrm{noise},2}^n}.
\end{align}
}
Next, the Cauchy-Schwarz inequality and the bound $\be{\fe{\cdot}} < 1$ yield}
\begin{align*}
A_6  \leq & \E{\sum_{n=1}^m \int_{t_{n-1}}^{t_n}\ska{\frac{\nabla \Xe(t)}{|\nabla \Xe|_{\eps}(t)}-\frac{\nabla \Xn}{|\nabla \Xn|_{\eps}},\nabla\f(t)-\nabla \ftau(t) } \d t}
\\
\leq & 2{\blue (t_m\be{\O})^{\frac{1}{2}}} \left({\blue \sum_{n=1}^m}\E{\eta_{\mathrm{noise},2}^n}\right)^{\frac{1}{2}}\,.
\end{align*}

We estimate $A_7,A_{8}$ with the Cauchy-Schwarz and Young inequality
\begin{align*}
A_7\leq& \frac{\lambda}{4}\sum_{n=1}^m\E{\int_{t_{n-1}}^{t_n}\nos{\Xe-\Xn} \d t}+ \lambda\sum_{n=1}^m\E{\eta_{\mathrm{noise},1}^n},
\\
A_{8}\leq& \frac{\lambda}{2}T\E{\nos{g-g_h}} +\frac{\lambda}{2}\sum_{n=1}^m\E{\eta_{\mathrm{noise},1}^n}.
\end{align*} 
We use the interpolation estimate \eqref{Clement_R} and (\ref{gradient_Estimate}) to estimate
{\blue
\begin{align*}
A_9
&\leq
\E{\sum_{n=1}^m \int_{t_{n-1}}^{t_n}\sum_{T\in \mathcal{T}^n_h} \no{R_h^n}_{L^2(T)}\no{X^\eps-X^{\tau}_{\eps,h}-\Pi_n\left[X^\eps-X^{\tau}_{\eps,h}\right]}_{L^2(T)} \d t}
\\
&\leq {\blue  C^*C \mathbb{E}\Bigg[ \sup_{t \in [0,T]}\|\nabla X^\eps(t)\|   }
\sum_{n=1}^m \int_{t_{n-1}}^{t_n}\left(\sum_{T\in \mathcal{T}^n_h} h_T^2 \no{R_h^n}_{L^2(T)}^2\right)^{1/2} \d t  \Bigg]
\\
&\leq {\blue  C \mathbb{E}\Bigg[ \sup_{t \in [0,T]}\|\nabla X^\eps(t)\|   }
T^{1/2}\left(\sum_{n=1}^m \int_{t_{n-1}}^{t_n}\sum_{T\in \mathcal{T}^n_h} h_T^2 \no{R_h^n}_{L^2(T)}^2\d t\right)^{1/2} \Bigg]
\\
&\leq  {\blue  C \mathbb{E}\Bigg[ \sup_{t \in [0,T]}\|\nabla X^\eps(t)\|^2\Bigg]^{1/2}   }
\mathbb{E}\Bigg[\sum_{n=1}^m \tau_n \eta_{\mathrm{space},1}^n\Bigg]^{1/2}
\\
&\leq C \left(\E{\nos{\nabla x_0}}+ \|\nabla g\|^2+\E{\sup\limits_{t \in [0,T]}\nos{\nabla\sigma(t)}} \right)^{\frac{1}{2}}  
\mathbb{E}\Bigg[\sum_{n=1}^m \tau_n \eta_{\mathrm{space},1}^n\Bigg]^{1/2}
\\
\\
&\leq C\mathbb{E}\Bigg[\sum_{n=1}^m \tau_n \eta_{\mathrm{space},1}^n\Bigg]^{1/2}\,,
\end{align*}
}
where we also used that $\Pi_nX^{\tau}_{\eps,h} = X^{\tau}_{\eps,h}$ by the projection property of $\Pi_n$, cf., \cite{BrennerS02}.

Analogically, using (\ref{Clement_J}) we estimate $A_{10}$ as
\begin{align*}
A_{10}
&\leq {\blue C\mathbb{E}\Bigg[\sum_{n=1}^m \tau_n \eta_{\mathrm{space},2}^n\Bigg]^{1/2}}.
\end{align*}

Next, we use \eqref{Clement_R}, \eqref{Clement_J} along with \eqref{stoch_terms} and obtain after applying the Cauchy-Schwarz and Young's inequalities and It\^o's isometry 
\begin{align*}
A_{11}
&\leq 
C\E{\sum_{n=1}^m\tau_n \eta^n_{\mathrm{space},1}} + C\E{\sum_{n=1}^m \eta_{\mathrm{noise},2}^n},
\\
A_{12}
&\leq 
C\E{\sum_{n=1}^m\tau_n \eta^n_{\mathrm{space},2}} + C\E{\sum_{n=1}^m \eta_{\mathrm{noise},2}^n}.
\end{align*}
After substituting the above estimates for $A_1,\,\dots,\,A_{12}$ into \eqref{all_terms} and recalling $Y^{\epsilon}(0)=x_0$, $Y^{\tau}_{\eps,h}(0)=Y_{\eps,h}^0 =X_{\eps,h}^0$ we conclude that
\begin{align}\label{all_terms_2}
\frac{1}{2}  &\E{\nos{{\redd Y^{\epsilon}(t_m)}-Y^{\tau}_{\eps,h}(t_m)}}
+\E{\sum_{n=1}^m \int_{t_{n-1}}^{t_n} \into \left(1-\frac{\be{\nabla\Xn}}{\be{\nabla\Xn}_\eps}\right)\frac{|\nabla (\Xe(t)-\Xn)|^2}{ |\nabla \Xe(t)|_{\epsilon}} \d x \d t}
\nonumber
\\ \nonumber
\leq & C\left(\E{\nos{{\redd x_0}-X_{\eps,h}^0}}+\lambda \nos{g-g_h}\right)+\hat{C} \Bigg\{\sum_{n=1}^m \tau_n \E{  \left(\eta_{\mathrm{time},1}^n\right)^2+  \eta_{\mathrm{time},2}^n}
\\\nonumber
&\quad {\blue  \E{\sum_{n=1}^m \tau_n \eta^n_{\mathrm{space},1}}} + {\blue  \E{\sum_{n=1}^m \tau_n \eta^n_{\mathrm{space},2}}}
+{\blue C\mathbb{E}\Bigg[\sum_{n=1}^m \tau_n \eta_{\mathrm{space},1}^n\Bigg]^{1/2}} + {\blue C\mathbb{E}\Bigg[\sum_{n=1}^m \tau_n \eta_{\mathrm{space},2}^n\Bigg]^{1/2}}
\\
&\quad + {\blue\sum_{n=1}^m }\E{ \eta_{\mathrm{noise},1}^n
+\eta_{\mathrm{noise},2}^n+{\eta_{\mathrm{noise},3}^n}}+ 
 \left({\blue\sum_{n=1}^m} \E{\eta_{\mathrm{noise},2}^n}\right)^{\frac{1}{2}}\Bigg\}\,.
\end{align}
On recalling \eqref{Transformation}, \eqref{umformen_1} we obtain by the triangle and Young's inequalities
\begin{align*}
\mathbb{E}& \left[\nos{Y^\epsilon(t_m)-Y^{\tau}_{\eps,h}(t_m)}\right]
\geq&
\frac{1}{2}\E{ \nos{X^{\epsilon}(t_m)-X^{\tau}_{\eps,h}(t_m)} }-\E{ \nos{\f(t_m)-\ftau(t_m)}}\,.
\end{align*} 
Furthermore, on recalling $\eqref{stochastic_term}$, $\eqref{stoch_umschreiben}$, by the Cauchy-Schwarz inequality, Young inequality  and It\^o's isometry we get that
\begin{align*}
& \E{\nos{\f(t_m)-\ftau(t_m) } }
\\
& \qquad  \leq  C\E{\sum_{n=1}^{m}\int_{t_{n-1}}^{t_n} \nos{\sigma(t)-\sigma(t_{n-1})}\d t} + C\E{\sum_{n=1}^m \tau_n\nos{\sigma(t_{n-1})- {\sigma_h(t_{n-1})} } } 
\\
& \qquad =C\E{\sum_{n=1}^{m}\eta^n_{\mathrm{noise},3}}.
\end{align*}
Hence, the assertion of the theorem follows after substituting the above two inequalities into (\ref{all_terms_2}).
\end{proof}

{\blue
\begin{remark}
As an alternative to the estimate \eqref{Splitt_non.lin} one may use the monotonicity of the nonlinear term \cite[Lemma 3.1]{Veeser}
and obtain the control of the following error quantity
$$
\int_\O \Bigg|\frac{\big(\nabla \Xe,\eps\big)}{|\nabla \Xe|_\eps} - \frac{\big(\nabla \Xn,\eps\big)}{|\nabla \Xn|_\eps} \Bigg|^2\frac{|\Xe|_\eps + |\Xn|_\eps}{2}\d x\,,
$$
where $(\mathbf{x},\eps) = (x_1, \dots, x_d ,\eps) \in\mathbb{R}^{d+1}$ for a vector $\mathbf{x}\in \mathbb{R}^d$.
\end{remark}
}

The next lemma controls the (global) difference between the SVI solution $X$ of \eqref{STVF} and the regularized solution $\Xe$ of (\ref{reg.STVF}) in terms of the regularization parameter $\epsilon$.
\begin{lemma}\label{Lemma.eps_diff}
Let $0  < T < \infty$ and {$x_0 \in L^2(\Omega,\F_0;\Hz)$,   $ g \in \Hz$} be fixed.
Let $\Xe$ be the variational solutions of \eqref{reg.STVF} for $\epsilon\in(0,1]$.
and $X$ be the {SVI} solution  of  \eqref{STVF}. Then the following estimate holds true 
\begin{align}\label{eps_difference}
 \E{\sup\limits_{t \in [0,T]} \nos{X(t)-\Xe(t)}}\leq 2\eps |\mathcal{O}|T.
\end{align}
\end{lemma}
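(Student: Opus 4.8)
The plan is to use the SVI characterisation of $X$ from Definition~\ref{def_varsoleps} (case $\eps=0$) and to test it against $Z=\Xe$, the regularised solution itself. For this to be legitimate, the key point is that for $\eps>0$ the regularised solution has more time regularity than recorded above: besides $\Xe\in L^2(\Omega;L^\infty(0,T;\Hz))$ one has $\partial_t\Ye=\div\!\big(\fe{\Xe}\big)-\lambda(\Xe-g)\in L^2(\Omega\times(0,T);\L)$. This is obtained (via Galerkin approximation) by testing \eqref{reg.RTVF} with $\partial_t\Ye$ and applying It\^o's formula to $u\mapsto\into\sqrt{\be{\nabla u}^2+\eps^2}\,\d x$, whose second derivative in the gradient argument is bounded by $\eps^{-1}$, so that the It\^o correction is controlled by $\tfrac{1}{2\eps}\E{\int_0^T\nos{\nabla\sigma(t)}\,\d t}<\infty$ (alternatively, cf.\ \cite{our_paper}). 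Consequently, writing $G^\eps:=-\partial_t\Ye=-\div\!\big(\fe{\Xe}\big)+\lambda(\Xe-g)\in L^2(\Omega\times(0,T);\L)$, the process $Z=\Xe$ solves $\d Z=-G^\eps\,\d t+\sigma\,\d W$ and, together with $G=G^\eps$, is an admissible test process in Definition~\ref{def_varsoleps}. The $\L$-regularity of $G^\eps$ is exactly what is needed, because $X-\Xe\in\L$ but not in $\Hz$.

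The elementary inequality behind the bound is that for every $u\in BV(\O)\cap\L$ the Hausdorff-measure boundary terms of $\bar{\mathcal{J}}_{\eps,\lambda}$ and $\bar{\mathcal{J}}_\lambda$ cancel, so, since $0\le\sqrt{a^2+\eps^2}-a\le\eps$,
\[
0\le\bar{\mathcal{J}}_{\eps,\lambda}(u)-\bar{\mathcal{J}}_\lambda(u)=\into\Big(\sqrt{\be{\nabla u}^2+\eps^2}-\be{\nabla u}\Big)\,\d x\le\eps\be{\O}.
\]
I now insert $Z=\Xe$, $G=G^\eps$ into the SVI for $X$. Since $X(0)=\Xe(0)=x_0$, and since $G^\eps(s)$ is the $\L$-valued subgradient of $\bar{\mathcal{J}}_{\eps,\lambda}$ at $\Xe(s)\in\Hz$ (it is the Fr\'echet derivative of $\bar{\mathcal{J}}_{\eps,\lambda}\vert_{\Hz}$ at $\Xe(s)$, and the subgradient inequality extends from $\Hz$ to all of $\L$ by a recovery sequence for the lower semicontinuous hull $\bar{\mathcal{J}}_{\eps,\lambda}$), convexity yields $\ska{X(s)-\Xe(s),G^\eps(s)}\le\bar{\mathcal{J}}_{\eps,\lambda}(X(s))-\bar{\mathcal{J}}_{\eps,\lambda}(\Xe(s))$. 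Substituting into the SVI and recombining the $\bar{\mathcal{J}}$-terms (all of which are integrable by the solution regularity and \eqref{gradient_Estimate}), I obtain with the displayed inequality
\begin{align*}
\frac{1}{2}\E{\nos{X(t)-\Xe(t)}}
&\le\E{\intt\big(\bar{\mathcal{J}}_\lambda(\Xe(s))-\bar{\mathcal{J}}_{\eps,\lambda}(\Xe(s))\big)\d s}
+\E{\intt\big(\bar{\mathcal{J}}_{\eps,\lambda}(X(s))-\bar{\mathcal{J}}_\lambda(X(s))\big)\d s}\\
&\le\eps\be{\O}\,t .
\end{align*}

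To get the supremum inside the expectation I use that $X$ and $\Xe$ are driven by the same additive noise $\sigma\,\d W$, so $X-\Xe$ carries no stochastic integral and the comparison above can be run pathwise: the relevant inequalities hold $\P$-a.s.\ without the expectation and simultaneously for all $t\in[0,T]$ (no Burkholder-Davis-Gundy step is needed), whence $\sup_{t\in[0,T]}\nos{X(t)-\Xe(t)}\le 2\eps\be{\O}T$ $\P$-a.s.; taking expectations gives \eqref{eps_difference}.

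The main obstacle is the regularity statement $G^\eps\in L^2(\Omega\times(0,T);\L)$ for $\eps>0$ — this is what makes $\Xe$ an admissible test process and is needed precisely because $X-\Xe$ is only $\L$- and not $\Hz$-valued — together with the related point that the pathwise upgrade used for the $\sup_t$-bound is licensed by the construction of the SVI solutions with common noise. The convexity/subgradient manipulations and the functional inequality are routine.
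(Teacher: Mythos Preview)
Your route---testing the SVI for $X$ directly with $Z=X^\eps$---is different from the paper's. The paper never invokes the SVI inequality; instead it compares two \emph{variational} solutions $X^{\eps_1},X^{\eps_2}$ of the regularised problem via It\^o's formula for $\|X^{\eps_1}-X^{\eps_2}\|^2$. Since both equations carry the same noise $\sigma\,\d W$, the stochastic integral cancels and one obtains a genuine \emph{pathwise} identity; the convexity estimate for the nonlinear term then gives $\|X^{\eps_1}(t)-X^{\eps_2}(t)\|^2\le 2T|\O|(\eps_1+\eps_2)$ $\P$-a.s.\ for all $t$. One takes $\sup_t$ \emph{before} expectations and only then lets $\eps_1\to 0$, using the strong convergence $X^{\eps_1}\to X$ in $L^2(\Omega;C([0,T];\L))$ from \cite[Theorem~3.2]{our_paper}.

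Your argument has a real gap at the last step. The SVI in Definition~\ref{def_varsoleps} is stated only in expectation, so what your computation actually yields is $\tfrac{1}{2}\E{\|X(t)-X^\eps(t)\|^2}\le \eps|\O|\,t$ for each fixed $t$, i.e.\ $\sup_t\E{\|X(t)-X^\eps(t)\|^2}\le 2\eps|\O|T$, not \eqref{eps_difference}. The claim that ``the comparison can be run pathwise'' because the noise cancels is not justified: $X$ admits no PDE formulation and no pathwise variational inequality is part of the definition, so there is nothing to run $\omega$-by-$\omega$. Getting a pathwise bound is precisely why the paper detours through two regularised (classical) solutions---It\^o's formula there is a pathwise identity, and the sup can be taken inside before passing to the limit. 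If you try to repair your argument by approximating $X$ via $X^{\eps'}$ and using pathwise bounds for $\|X^{\eps'}-X^\eps\|^2$, you end up reproducing the paper's proof.

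A secondary issue: the admissibility $G^\eps=-\partial_tY^\eps\in L^2(\Omega\times(0,T);\L)$ (rather than $\Hm$) is stronger than what is established in the paper; your sketch via testing with $\partial_tY^\eps$ is plausible for fixed $\eps>0$ but is not a consequence of \eqref{gradient_Estimate} alone and would need a separate argument.
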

{\redd
\begin{proof}
We denote by $X^{\epsilon_1},X^{\epsilon_2}$ the variational solutions of \eqref{reg.STVF}
with $\epsilon\equiv\epsilon_1$, $\epsilon\equiv\epsilon_2$, respectively. 
By It\^o's formula for $\nos{X^{\epsilon_1}-X^{\epsilon_2}}$, noting that $X^{\epsilon_1}(0)-X^{\epsilon_2}(0) = 0$, we deduce $\mathbb{P}$-a.s.
\begin{align}\label{eps.difference}
\frac{1}{2}& \nos{X^{\epsilon_1}(t)-X^{\epsilon_2}(t)}
 \nonumber\\
=&-\intt \ska{\frac{\nabla X^{\epsilon_1}(s)}{ \sqrt{ \vert \nabla X^{\epsilon_1}(s) \vert^2 +\epsilon_1^2}}-\frac{\nabla X^{\epsilon_2}(s)}{ \sqrt{ \vert \nabla X^{\epsilon_2}(s) \vert^2 +\epsilon_2^2}},\nabla (X^{\epsilon_1}(s)-X^{\epsilon_2}(s))}\d s
  \\
&-\lambda\intt \nos{(X^{\epsilon_1}(s)-X^{\epsilon_2}(s)}\d s. \nonumber
\end{align}
We estimate the first term on the right-hand side of \eqref{eps.difference} using the convexity of $\sqrt{|\cdot|^2 + \eps^2}$ as
\begin{align}\label{est_convex}
 & \ska{\frac{\nabla X^{\epsilon_1}(s)}{ \sqrt{ \vert \nabla X^{\epsilon_1}(s) \vert^2 +\epsilon_1^2}}-\frac{\nabla X^{\epsilon_2}(s)}{ \sqrt{ \vert \nabla X^{\epsilon_2}(s) \vert^2 +\epsilon_2^2}},\nabla (X^{\epsilon_1}(s)-X^{\epsilon_2}(s))}
\nonumber \\
 =&\ska{\frac{\nabla X^{\epsilon_1}(s)}{ \sqrt{ \vert \nabla X^{\epsilon_1}(s) \vert^2 +\epsilon_1^2}},\nabla (X^{\epsilon_1}(s)-X^{\epsilon_2}(s))}
\nonumber \\
 &+\ska{\frac{\nabla X^{\epsilon_2}(s)}{ \sqrt{ \vert \nabla X^{\epsilon_2}(s) \vert^2 +\epsilon_2^2}},\nabla (X^{\epsilon_2}(s)-X^{\epsilon_1}(s))}
\\
 \geq& \into \sqrt{\bes{\nabla X^{\epsilon_1}}+\epsilon_1^2}-\sqrt{\bes{\nabla X^{\epsilon_2}}+\epsilon_1^2}\, \d \xx 
\nonumber\\
\nonumber
 &+\into \sqrt{\bes{\nabla X^{\epsilon_2}}+\epsilon_2^2}-\sqrt{\bes{\nabla X^{\epsilon_1}}+\epsilon_2^2}\,   \d \xx.
 \end{align}
We observe that
 \begin{align*}
\into& \left(\sqrt{\bes{\nabla X^{\epsilon_1}}+\epsilon_1^2}-\sqrt{\bes{\nabla X^{\epsilon_1}}+\epsilon_2^2}\right)  \d \xx
\\
 &=\into \frac{\left(\sqrt{\bes{\nabla X^{\epsilon_1}}+\epsilon_1^2}-\sqrt{\bes{\nabla X^{\epsilon_1}}+\epsilon_2^2}\right)\left(\sqrt{\bes{\nabla X^{\epsilon_1}}+\epsilon_1^2}+\sqrt{\bes{\nabla X^{\epsilon_1}}+\epsilon_2^2}\right)}{\sqrt{\bes{\nabla X^{\epsilon_1}}+\epsilon_1^2}+\sqrt{\bes{\nabla X^{\epsilon_1}}+\epsilon_2^2}}\d \xx
\\
 &= \into  \frac{\bes{\nabla X^{\epsilon_1}}+\epsilon_1^2-\bes{\nabla X^{\epsilon_1}}-\epsilon_2^2}{\sqrt{\bes{\nabla X^{\epsilon_1}}+\epsilon_1^2}+\sqrt{\bes{\nabla X^{\epsilon_1}}+\epsilon_2^2}}\d \xx
\\
 &= \into \frac{(\epsilon_1+\epsilon_2)(\epsilon_1-\epsilon_2)}{\sqrt{\bes{\nabla X^{\epsilon_1}}+\epsilon_1^2}+\sqrt{\bes{\nabla X^{\epsilon_1}}+\epsilon_2^2}}\d \xx
\\
 &\leq \into \be{\epsilon_1 -\epsilon_2}\left( \frac{\epsilon_1}{\sqrt{\bes{\nabla X^{\epsilon_1}}+\epsilon_1^2}}+ \frac{\epsilon_2}{\sqrt{\bes{\nabla X^{\epsilon_1}}+\epsilon_2^2}} \right)\d \xx
 \\ 
 &\leq 2\be{\O}(\epsilon_1+\epsilon_2)\,.
 \end{align*}
 Using the inequality above, we get 
 \begin{align*}
 &\into \sqrt{\bes{\nabla X^{\epsilon_1}}+\epsilon_1^2}-\sqrt{\bes{\nabla X^{\epsilon_2}}+\epsilon_2^2} \d \xx 
 +\into \sqrt{\bes{\nabla X^{\epsilon_2}}+\epsilon_2^2}-\sqrt{\bes{\nabla X^{\epsilon_1}}+\epsilon_2^2}   \d \xx\\
 &\geq - \left|\into \sqrt{\bes{\nabla X^{\epsilon_1}}+\epsilon_1^2}-\sqrt{\bes{\nabla X^{\epsilon_1}}+\epsilon_2^2} \d \xx \right|\\
 &\quad-\left|\into \sqrt{\bes{\nabla X^{\epsilon_2}}+\epsilon_1^2}-\sqrt{\bes{\nabla X^{\epsilon_2}}+\epsilon_2^2}   \d \xx\right| \\
 &\geq -2 \be{\O}(\epsilon_1+\epsilon_2).
 \end{align*}
Substituting (\ref{est_convex}) along with the last inequality into \eqref{eps.difference} yields
\begin{align}\label{Cauchy eps}
\nos{X^{\epsilon_1}(t)-X^{\epsilon_2}(t)}\leq 2 T\be{\O}(\epsilon_1+\epsilon_2).
\end{align}
Next, we take the supremum over $t \in [0,T]$ and expectation in \eqref{Cauchy eps}, set $\epsilon_2 \equiv \epsilon$ and take the limit for $\epsilon_1 \rightarrow 0$.
By the strong convergence $X^{\epsilon} \rightarrow X $ in $ L^2(\Omega; (C(0,T);\L))$, cf., \cite[Theorem 3.2]{our_paper}, we conclude that
\begin{align*}
\E{\sup\limits_{t \in [0,T]} \nos{X(t)-X^{\epsilon}(t)}}
 =& \lim_{\epsilon_1 \rightarrow  0}
\E{ \sup\limits_{t \in [0,T]} \nos{X^{\epsilon_1}(t)-X^{\epsilon}(t)}}
\leq 2 \epsilon\be{\O} T.
\end{align*}
\end{proof} 
}

The following error estimate for the numerical approximation of the SVI solution of (\ref{STVF}) is the direct consequence of Lemma~\ref{Lemma.eps_diff} and Theorem~\ref{thms_Apriori_Estimate}.
\begin{cors}\label{thms_Apriori_Estimate_2} Let $X$ be the SVI solution of \eqref{STVF}, let $\{\Xn\}_{n=0}^N$ be the solution of \eqref{num.STVF}, $X^{\tau}_{\eps,h}$ the corresponding interplolation \eqref{Interpolation-linear_X}
and let the conditions of Theorem~\ref{thms_Apriori_Estimate} hold.
Then the following estimates holds for $1 \leq m \leq N$:
\begin{align*}
&\frac{1}{2}\E{\nos{X(t_m)-X^{\tau}_{\eps,h}(t_m)}}
\\ \nonumber
&\leq
C\left(\eps + \E{\nos{{\redd x_0}-X_{\eps,h}^0}}+\lambda \nos{g-g_h}\right)+\hat{C} \Bigg\{\sum_{n=1}^m \tau_n \E{  \left(\eta_{\mathrm{time},1}^n\right)^2+  \eta_{\mathrm{time},2}^n}
\\\nonumber
&\quad + {\blue  \E{\sum_{n=1}^m \tau_n \eta^n_{\mathrm{space},1}}} + {\blue  \E{\sum_{n=1}^m \tau_n \eta^n_{\mathrm{space},2}}}
+{\blue C\mathbb{E}\Bigg[\sum_{n=1}^m \tau_n \eta_{\mathrm{space},1}^n\Bigg]^{1/2}} + {\blue C\mathbb{E}\Bigg[\sum_{n=1}^m \tau_n \eta_{\mathrm{space},2}^n\Bigg]^{1/2}}
\\
&\quad + {\blue\sum_{n=1}^m }\E{ \eta_{\mathrm{noise},1}^n
+\eta_{\mathrm{noise},2}^n+{\eta_{\mathrm{noise},3}^n}}+ 
 \left({\blue\sum_{n=1}^m} \E{\eta_{\mathrm{noise},2}^n}\right)^{\frac{1}{2}}\Bigg\}\,.
\end{align*}
\end{cors}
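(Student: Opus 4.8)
The plan is to obtain the assertion by combining the a posteriori bound of Theorem~\ref{thms_Apriori_Estimate}, which controls the error between the numerical solution $X^{\tau}_{\eps,h}$ and the \emph{regularized} variational solution $\Xe$, with the regularization estimate of Lemma~\ref{Lemma.eps_diff}, which controls the error between $\Xe$ and the SVI solution $X$ of the unregularized problem \eqref{STVF}. First I would fix $1\le m\le N$ and split, using the triangle and Young inequalities,
\begin{align*}
\frac{1}{2}\nos{X(t_m)-X^{\tau}_{\eps,h}(t_m)} \leq \nos{X(t_m)-\Xe(t_m)} + \nos{\Xe(t_m)-X^{\tau}_{\eps,h}(t_m)}\,,
\end{align*}
and then take expectations.

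For the first term on the right-hand side I would bound the pointwise value at $t_m$ by the supremum and apply Lemma~\ref{Lemma.eps_diff}, namely $\E{\nos{X(t_m)-\Xe(t_m)}}\le \E{\sup_{t\in[0,T]}\nos{X(t)-\Xe(t)}}\le 2\eps\be{\O}T$; the resulting contribution is absorbed into the constant $C$ (which by the statement of Theorem~\ref{thms_Apriori_Estimate} already depends on $\be{\O}$ and $T$), producing the $C\eps$ term in the assertion. For the second term I would observe that the second summand on the left-hand side of the estimate in Theorem~\ref{thms_Apriori_Estimate} is nonnegative, since $\be{\nabla \Xn}\le\be{\nabla \Xn}_\eps$ pointwise and $\be{\nabla \Xe(t)}_\eps>0$; dropping this term, Theorem~\ref{thms_Apriori_Estimate} already delivers precisely the bound on $\E{\nos{\Xe(t_m)-X^{\tau}_{\eps,h}(t_m)}}$ by the displayed right-hand side of the theorem. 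Adding the two contributions and relabelling constants yields the claimed estimate.

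There is no real obstacle here: the statement is a direct corollary of the two preceding results. The only points requiring a moment of care are that Lemma~\ref{Lemma.eps_diff} is phrased with a supremum in time whereas we only need the value at $t_m$ (handled trivially), and that the nonnegative gradient term on the left-hand side of Theorem~\ref{thms_Apriori_Estimate} must be discarded before invoking it, which is licit by the sign of $1-\be{\nabla \Xn}/\be{\nabla \Xn}_\eps$.
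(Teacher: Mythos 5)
Your proposal is correct and is exactly the argument the paper intends: the corollary is stated as a direct consequence of Theorem~\ref{thms_Apriori_Estimate} and Lemma~\ref{Lemma.eps_diff}, obtained by the Young-type splitting $\frac12\nos{X(t_m)-X^{\tau}_{\eps,h}(t_m)}\le\nos{X(t_m)-\Xe(t_m)}+\nos{\Xe(t_m)-X^{\tau}_{\eps,h}(t_m)}$, bounding the first term by $2\eps\be{\O}T$ via the lemma and the second by the theorem after discarding the nonnegative gradient term on its left-hand side. No gaps.
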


\section{Linearization error}\label{sec_lin}

In this section we discuss the generalization of the a posteriori estimate of Theorem~\ref{thms_Apriori_Estimate} which takes into account the linearization 
error in the (nonlinear) scheme \eqref{num.STVF}. We adopt the notation from the previous section.

We consider the following linearized scheme: given $\Xnm$ and some approximation $X_{\eps,h}^{n,*}$ determine $X_{\eps,h}^{n}$ as the solutions
of the (linear) semi-implicit scheme
\begin{align}\label{num_lin} 
(X_{\eps,h}^{n},\phi_h)=&\ska{\Xnm,\phi_h}-\tau_n\ska{\frac{\nabla X_{\eps,h}^{n}}{\vert\nabla X_{\eps,h}^{n,*}|_{\eps}},\nabla \phi_h}-\tau_n\lambda(X_{\eps,h}^{n}-g_h,\phi_h)
\\ \nonumber
 &+\ska{{\sigma_h}(t_{n-1})\Delta_n W,\phi_h} ~~\qquad \forall \phi_h \in \mathbb{V}_h^n.
\end{align}
The choice of the linearization in the above scheme corresponds to a fixed-point iteration for the approximation of the discrete nonlinear system related to (\ref{num.STVF});
i.e. $X_{\eps,h}^{n} \equiv X_{\eps,h}^{n,\ell}$, $X_{\eps,h}^{n,*} \equiv X_{\eps,h}^{n,\ell-1}$ is updated iteratively for $\ell=1,\dots$ until, e.g., $\|X_{\eps,h}^{n,\ell} - X_{\eps,h}^{n,\ell-1}\|_{\mathbb{L}^\infty}$
is below a prescribed tolerance.
The choice $X_{\eps,h}^{n,*} = X_{\eps,h}^{n-1}$ corresponds to the semi-implicit scheme which was proposed for the deterministic problem in \cite{bdn18}, the convergence of this scheme
in the stochastic setting has not been established so far.

We define the linearized analogue of (\ref{num.RTVF}) as
\begin{align}\label{lin_rpde} 
\ska{\frac{Y_{\eps,h}^{n} - \Ynm}{\tau_n},\phi_h}=-\ska{\frac{\nabla X_{\eps,h}^{n}}{|\nabla X_{\eps,h}^{n,*}|_{\eps}},\nabla \phi_h}-\lambda(X_{\eps,h}^{n}-g_h,\phi_h) ~~\qquad \forall \phi_h \in \mathbb{V}_h^n\,.
\end{align}
As in Lemma~\ref{lem_disctrans} one can show that the linear time-interpolants satisfy $Y^{\tau}_{\eps,h} = X^{\tau}_{\eps,h}-\ftau$.

Analogically to (\ref{differnz_Eq}) in the proof of Theorem~\ref{thms_Apriori_Estimate} we deduce that the solution of (\ref{num_lin}) satisfies the error equation
\begin{align}\label{errl}
\left\langle \partial_t (\Ye-Y^{\tau}_{\eps,h})(t),\phi\right\rangle
+\ska{\frac{\nabla \Xe(t)}{|\nabla \Xe(t)|_{\eps}}-\frac{\nabla \Xn}{|\nabla \Xn|_{\eps}},\nabla \phi}
+\lambda(\Xe(t)-\Xn,\phi)-\lambda(g-g_h,\phi) \nonumber\\
\qquad =\ska{R_h^{n}, \phi-\phi_h}
-\ska{\frac{\nabla \Xn}{|\nabla \Xn|_{\eps}},\nabla(\phi-\phi_h)} + \ska{\frac{\nabla \Xn}{|\nabla X_{\eps,h}^{n,*}|_{\eps}} - \frac{\nabla \Xn}{|\nabla \Xn|_{\eps}},\nabla\phi}. 
\end{align}
As in the proof of Theorem~\ref{thms_Apriori_Estimate} we set $\phi= Y^{\eps}-Y^{\tau}_{\eps,h}$, $\phi_h= \Pi_n\left[Y^{\eps}-Y^{\tau}_{\eps,h}\right]$ and integrate the result over $(0,t_m)$.
We only need to analyze the last term on the right-hand side of (\ref{errl}), the remaining terms can be estimated exactly as in Theorem~\ref{thms_Apriori_Estimate}.

We get using the Cauchy-Schwarz and Young's inequalities
\begin{align}\label{lin_est}
\nonumber
& \sum_{n=1}^m
\int_{t_{n-1}}^{t_n}\ska{\frac{\nabla \Xn}{|\nabla X_{\eps,h}^{n,*}|_{\eps}} - \frac{\nabla \Xn}{|\nabla \Xn|_{\eps}},\nabla(Y^{\eps}-Y^{\tau}_{\eps,h})}
\\
& \qquad \leq 
\frac{1}{2}\sum_{n=1}^m \tau_n \left\| \frac{\nabla \Xn}{|\nabla X_{\eps,h}^{n,*}|_{\eps}} - \frac{\nabla \Xn}{|\nabla \Xn|_{\eps}} \right\|^2 
+ \frac{1}{2} \sum_{n=1}^m \int_{t_{n-1}}^{t_n} \|\nabla(Y^{\eps}-Y^{\tau}_{\eps,h})\|^2\,.
\end{align}
On recalling that $Y^{\eps}= X^\eps-\f$, $Y^{\tau}_{\eps,h} = X^{\tau}_{\eps,h}-\ftau$ and noting the definition of $\eta_{\mathrm{noise},2}^n$, $\eta_{\mathrm{time},2}^n$
we estimate
$$
\int_{t_{n-1}}^{t_n} \E{\|\nabla(Y^{\eps}-Y^{\tau}_{\eps,h})\|^2}\leq C\tau_n \E{(\eta_{\mathrm{time},2}^n)^2 + \eta_{\mathrm{noise},2}^n}\,.
$$
Finally, the first term on the right-hand side in (\ref{lin_est}) yields the linearization error indicator
$$
\eta_{\mathrm{lin}}^n = \E{\left\| \frac{\nabla \Xn}{|\nabla X_{\eps,h}^{n,*}|_{\eps}} - \frac{\nabla \Xn}{|\nabla \Xn|_{\eps}} \right\|^2}\,.
$$
\section{Numerical experiments}\label{sec_num}

We perform the numerical experiments using the fully discrete finite element scheme (\ref{num.STVF})
on the unit square $\D= (0,1)^2$ with a slightly more general noise term. 
The scheme for $n=1, \dots, N$ then reads as
\begin{align}\label{fem_scheme}
\ska{X^n_{\varepsilon,h},\vh} =& \ska{X^{n-1}_{\varepsilon,h},\vh}-\tau_n \ska{\fe{X^n_{\varepsilon,h}},\nabla\vh } 
\nonumber \\ & 
-\tau_n\lambda\ska{X^n_{\varepsilon,h} - g_h^n,\vh}+ \widetilde{\sigma} \ska{ {\Delta_n \mathbf{W}},\vh} &&\forall \vh \in \mathbb{V}_h^n\,, 
\end{align}
where $g_h^n\in\mathbb{V}_h^n$ is a suitable approximations of the function $g$ and $\widetilde{\sigma}$ is a constant.

Furthermore, we set $X^0_{\varepsilon,h} = 0$ (i.e., we use a homogeneous initial condition $x_0\equiv 0$)
and $T=0.05$, $\lambda=200$ in all experiments.

The initial time step is chosen as $\tau_0 = 10^{-5}$, 
and $\eps=2^{-5}$, $\widetilde{\sigma}=0.25$,  if not mentioned otherwise.
{\blue The finite element space $\mathbb{V}_h^*$ is subordinate to the ''macro'' triangulation $\mathcal{T}_h^*$ which is constructed by dividing the domain $\D$ into squares with side $h = 2^{-5}$,
each square is sub-divided into four triangles with equal size; we set $\mathbb{V}_h^0\equiv \mathbb{V}_h^*(\mathcal{T}_h^*)$.
Below, we denote by $\mathcal{I}_h^n: C(\overline{\O})\rightarrow \mathbb{V}_h^n$ the standard nodal interpolation operator on $\mathbb{V}_h^n$.}

The space-time noise in (\ref{fem_scheme}) has the form
$$
{\blue \Delta_n \mathbf{W}(x,y) = \mathcal{I}_h^0\big(\sin(4\pi x) \sin(4\pi y)\Big) \Delta_n \beta_1 + \mathcal{I}_h^0\big(\sin(5\pi x) \sin(5\pi y)\Big) \Delta_n \beta_2\,,}
$$
where ${\beta}_1$, $\beta_2$ are independent scalar-valued Wiener processes; {\blue we employ the nodal interpolation instead of the $\mathbb{L}^2$-projection $\mathcal{P}_h^0$ for simplicity}.

We take the function $g$ to be the characteristic function of a circle with radius $0.25$, 
and set $g_h^n = \mathcal{I}_h^n g + {\blue \xi^{*}_h} \in \mathbb{V}_h^n$ 
with $\displaystyle \xi_h^*(x) = 0.1 \sum_{\ell=1}^{L} \phi_\ell (x) \xi_\ell$, $x\in\D$
where $\xi_\ell$, $\ell=1,\dots, L$ are realizations of independent $\mathcal{U}(-1,1)$-distributed random variables
and $\{\phi_\ell\}_{\ell=1}^L$ is the standard Lagrange basis of the space $\mathbb{V}_h^0 \equiv \mathbb{V}_h^*$.
The corresponding realization of $\xi_h^*$ is displayed in Figure~\ref{fig_data} (right).
\begin{figure}[!htp]
\center
\includegraphics[width=0.3\textwidth]{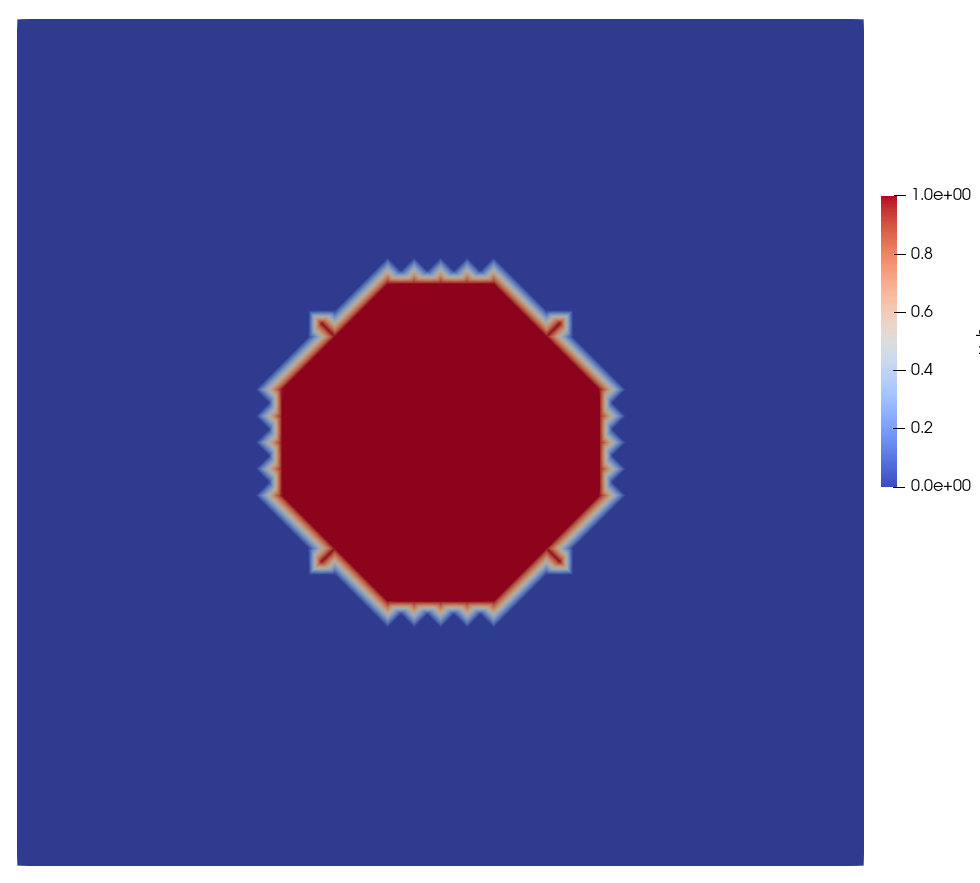}
\includegraphics[width=0.3\textwidth]{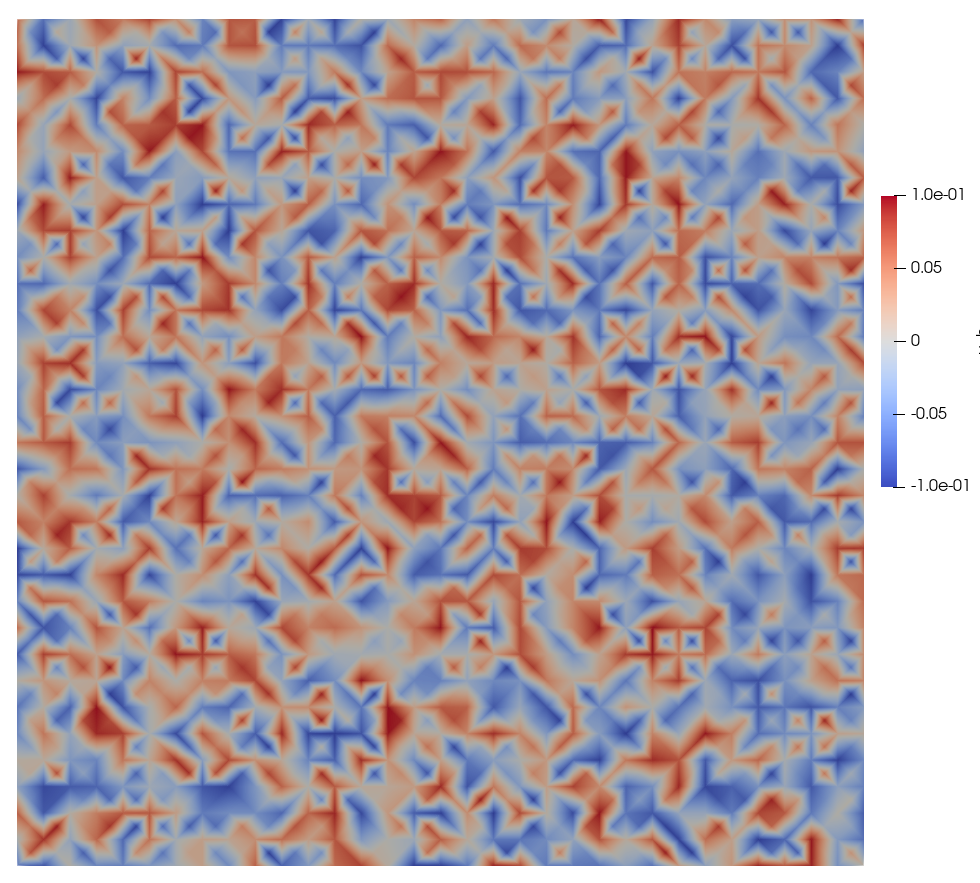}
\includegraphics[width=0.3\textwidth]{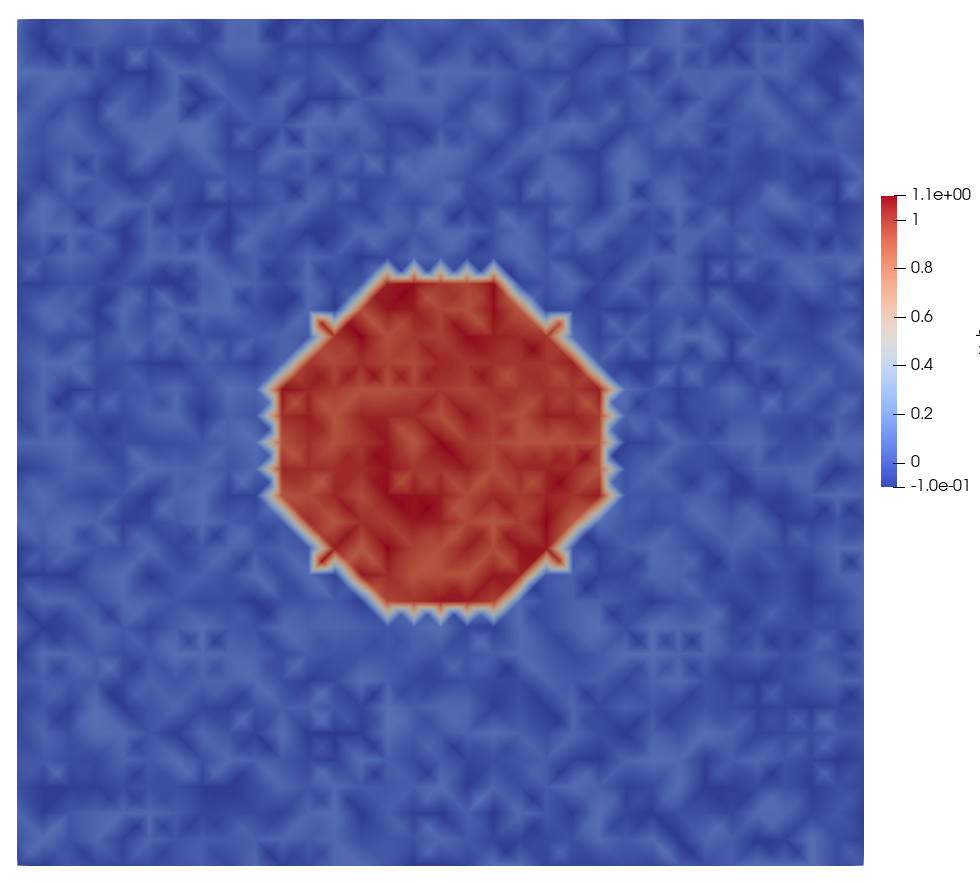}
\caption{The function $g_h^0$ (left), the noise $\xi_h^0$ (middle) and $g_h^0+\xi_h^0$ (right).}
\label{fig_data}
\end{figure}

For simplicity we employ a pathwise refinement algorithm which is explicit in time. That is, we use a Monte-Carlo approach and
for each $\omega\in\Omega$ we determine $\tau_n\equiv \tau_n(\omega)$ and $\mathcal{T}_h^n \equiv \mathcal{T}_h^n(\omega)$ using the solution from the previous time step as follows.
Given a realization $X^{n-1}_{\varepsilon,h}(\omega_k)$ of the random variable $X^{n-1}_{\varepsilon,h}$
and the tolerances $TOL_\tau$, $TOL_h$ we compute the realization $X^{n}_{\varepsilon,h}(\omega_k)$ using the scheme (\ref{fem_scheme}) with $\tau_n\equiv \tau_n(\omega_k)$, $\mathbb{V}_h^n\equiv \mathbb{V}_h^n(\mathcal{T}_h^n(\omega_k))$
The triangulation $\mathcal{T}_h^{n}(\omega_k)$ is obtained from $\mathcal{T}_h^{n-1}(\omega_k)$ by local refinement an coarsening using an error equidistribution strategy, cf., \cite{ej91}:
\begin{itemize}
\item[(i)] For all $T\in \mathcal{T}_h^{n-1}(\omega_k)$ compute the local error estimate 
$$
\displaystyle \eta^n_T = \eta_{\mathrm{space},1}^{n-1}(T) + \sum_{E\in\partial\overline{T}}\eta_{\mathrm{space},2}^{n-1}(E)\,,
$$
using the values of $X^{n-1}_{\varepsilon,h}(\omega_k)$,
\begin{itemize}
\item[(a)] if $\eta^n_T > 0.9\, TOL_h/ (\# \mathcal{T}_h^{n-1})^{1/2}$ mark $T$ for refinement,
\item[(b)] if $\eta^n_T < 0.1\, TOL_h /(\# \mathcal{T}_h^{n-1})^{1/2}$ mark $T$ for coarsening;
\end{itemize}
\item[(ii)] construct $\mathcal{T}_h^{n}(\omega_k)$ by local refinement/coarsening of elements in $\mathcal{T}_h^{n-1}(\omega_k)$,
\end{itemize}
where $\# \mathcal{T}^n_h$ denotes the number of nodes (degrees of freedom) of the mesh $\mathcal{T}^n_h$.

Once the triangulation $\mathcal{T}_h^{n}(\omega_k)$ has been constructed we compute $X^{n}_{\varepsilon,h}(\omega_k)\in \mathbb{V}_h^n(\mathcal{T}_h^n(\omega_k))$
using a simple fixed-point nonlinear solver (cf. \eqref{num_lin}).
The fixed-point iteration is terminated once the difference of two subsequent iterates in the $\mathbb{L}^\infty$-norm drop below the tolerance $10^{-4}$;
with this tolerance the value of the linearization error indicator $\eta_{\mathrm{lin}}^n$ was roughly two orders of magnitude less than the values of the time error indicator $\eta^{n}_{\mathrm{time},2}$.

Given (the approximation of) $X^{n}_{\varepsilon,h}(\omega_k)$ we determine the new time step $\tau_{n+1}(\omega_k)$ as follows
\begin{itemize}
\item[(i)] Compute the time-error indicator $\eta^{n}_{\mathrm{time},2}(\omega_k)$;
\begin{itemize}
\item[(a)] if $\eta^{n}_{\mathrm{time},2}(\omega_k) > TOL_{\tau}$ (or the fixed point iterations require $>30$ iterations)  set $\tau^{n+1}(\omega_k) = 0.5\tau^{n}(\omega_k)$;
\item[(b)] if $\eta^{n}_{\mathrm{time},2}(\omega_k) < 0.3 TOL_{\tau}$ (and the fixed point iterations require $<15$ iterations) set $\tau^{n+1}(\omega_k) = 1.5\tau^{n}(\omega_k)$;
\end{itemize}
\item[(ii)] set $n\leftarrow n+1$ and proceed to next time level.
\end{itemize}

{\blue 
We note that due to the fact that the above algorithm is explicit (i.e., we do not iterate the space-time refinement loop until the tolerance is met),
the value of the error indicators $\eta^{n}_{\mathrm{time},2}$, $\eta^n_h = \sum_{T\in \mathcal{T}_h^n} \eta^n_T$ may overshoot the respective tolerance $TOL_{\tau}$, $TOL_{h}$.
Nevertheless, the numerical results below indicate that, except for an overshoot in the initial stages of the computations,
the time averages of the error indicators remain below the prescribed tolerances.

In the present setting the noise function $\sigma\equiv\sigma(\omega,t,x,y)$ takes the form $\sigma \equiv \sigma(x,y) \equiv \widetilde{\sigma}\big(\sin(4\pi x) \sin(4\pi y) + \sin(5\pi x) \sin(5\pi y)\big)$.
Hence, by the approximation properties of the nodal interpolation operator we deduce that $\eta_{\mathrm{noise},1}^n \approx \eta_{\mathrm{noise},3}^n \approx \tau_n h_*^2 \|\sigma \|_{\mathbb{H}^2}^2$, 
$\eta_{\mathrm{noise},2}^n \approx \tau_n h_* \|\sigma \|_{\mathbb{H}^2}^2$, where $h_*$ is the mesh size of the macro triangulation $\mathcal{T}_h^*$.
Therefore, the contribution of the noise error indicators can be estimated a priori and it is not necessary to include them in the adaptive algorithm.
}

We perform the computations below with the tolerance $(TOL_h, TOL_\tau) = TOL_k$ $k=0,1,2$ where $TOL_k = 2^{-k}(2,0.25)$.
In Figure~\ref{fig_sol_tol2} we display one path of the solution computed by the adaptive algorithm with the tolerance $TOL_0$,
the corresponding meshes are displayed in Figure~\ref{fig_mesh_tol2}.
We observe that initial the mesh is refined within the circle
due to the effect of the penalization term $\lambda$ which dominates the error estimate
in the early stages of the computation, since we start with an initial condition that is far from the function $g$.
In the later stages the mesh refinement concentrates along the edge of the circle with radius $0.25$, i.e., along the discontinuity of the function $g$, cf., \cite{Veeser}, \cite{bartels2021}.

\begin{figure}[!htp]
\center
\includegraphics[width=0.24\textwidth]{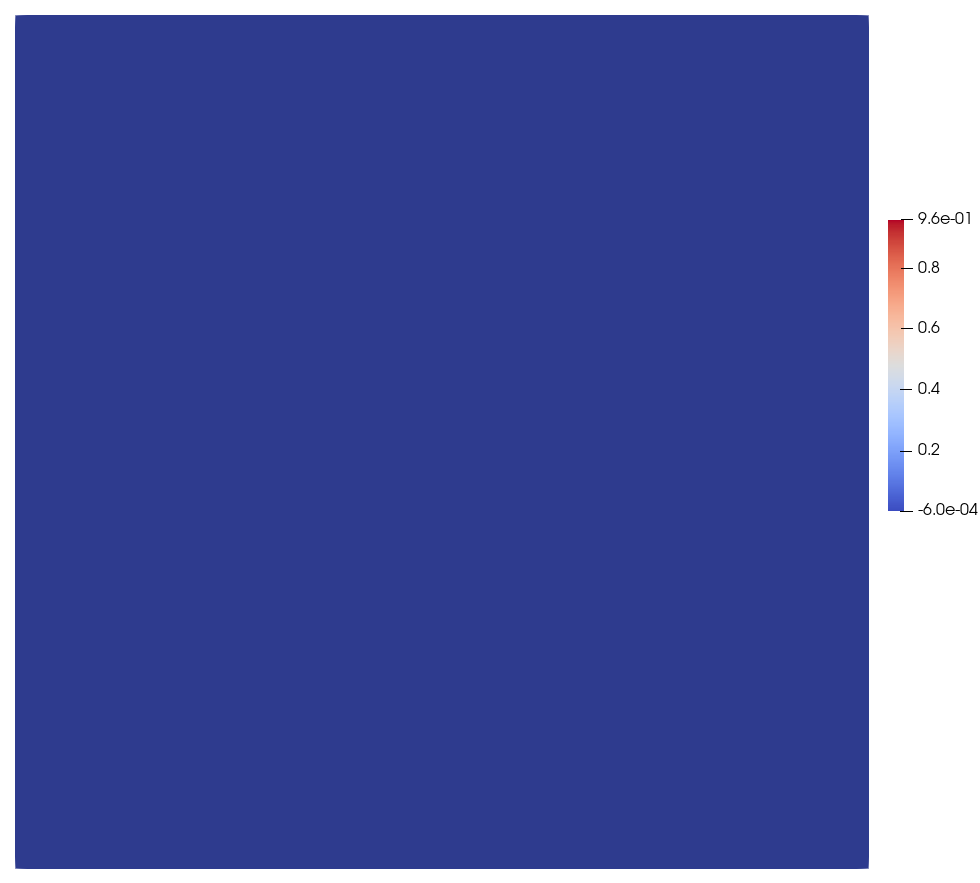}
\includegraphics[width=0.24\textwidth]{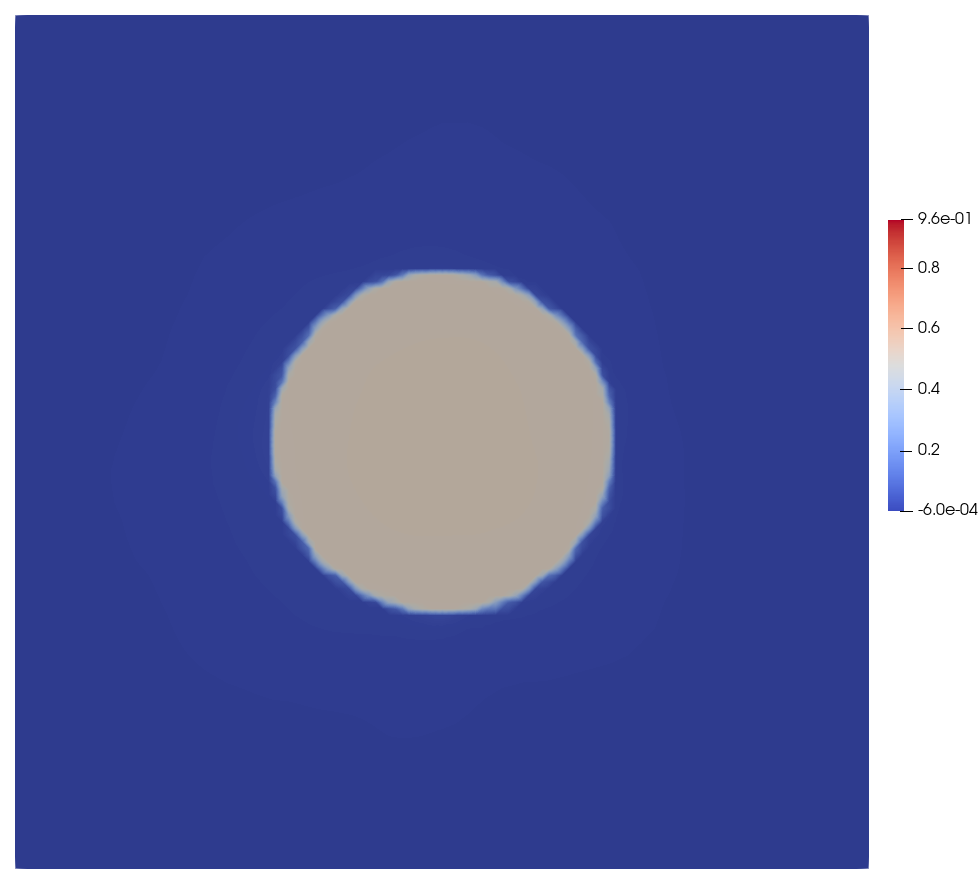}
\includegraphics[width=0.24\textwidth]{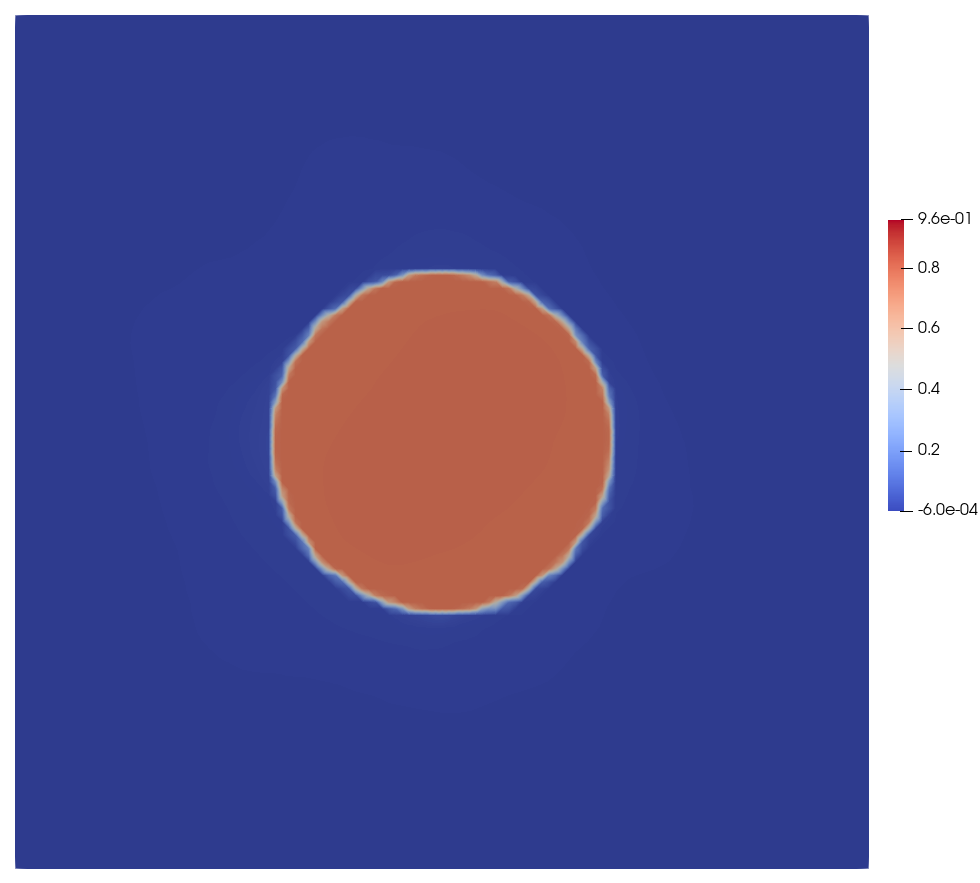}
\includegraphics[width=0.24\textwidth]{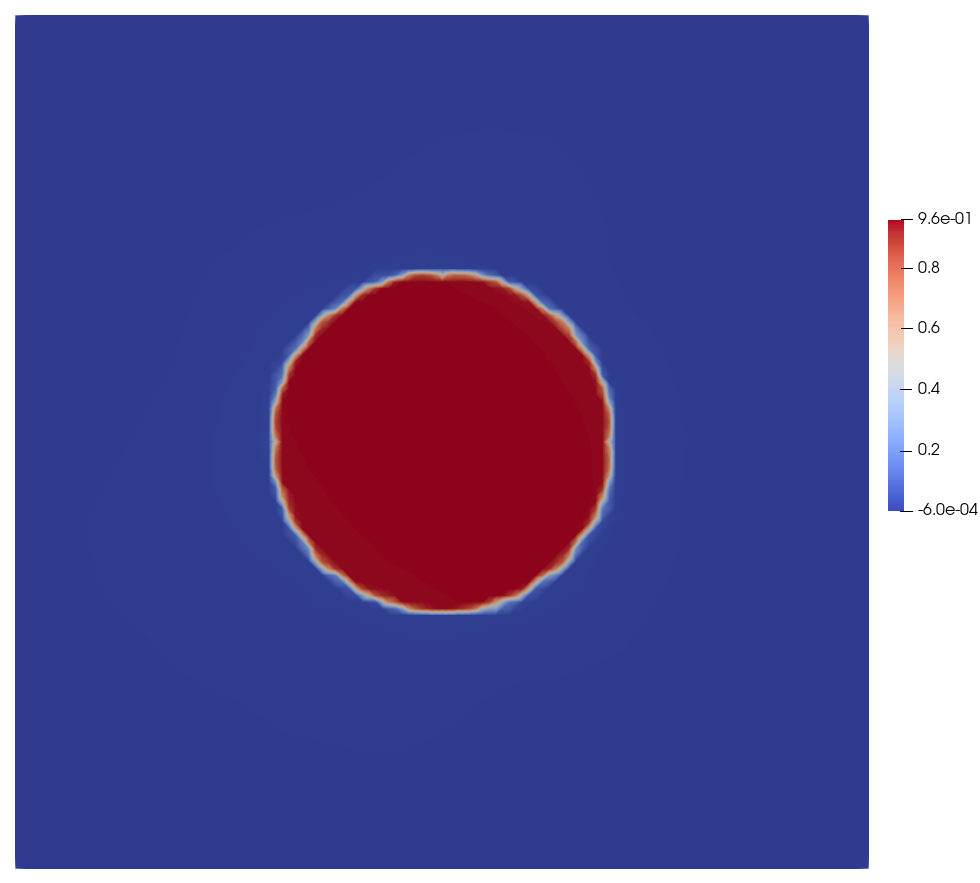}
\caption{Solution at time $t=0,0.0017,0.0026,0.05$ computed with tolerance $TOL_0$.}
\label{fig_sol_tol2}
\end{figure}
\begin{figure}[!htp]
\center
\includegraphics[width=0.24\textwidth]{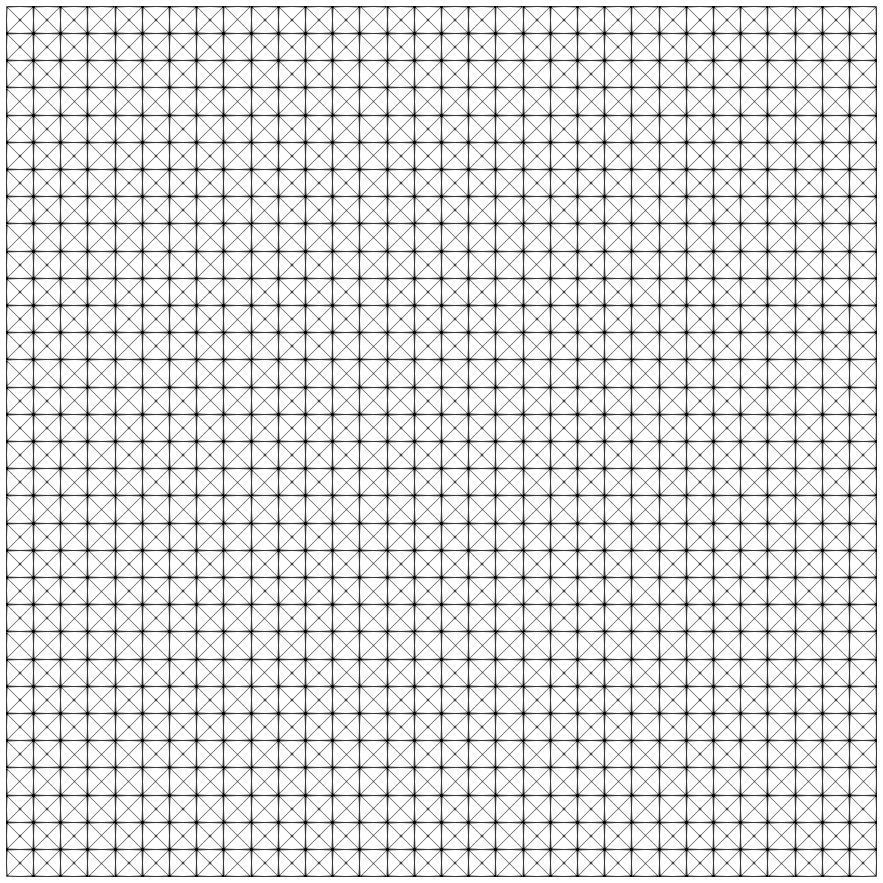}
\includegraphics[width=0.24\textwidth]{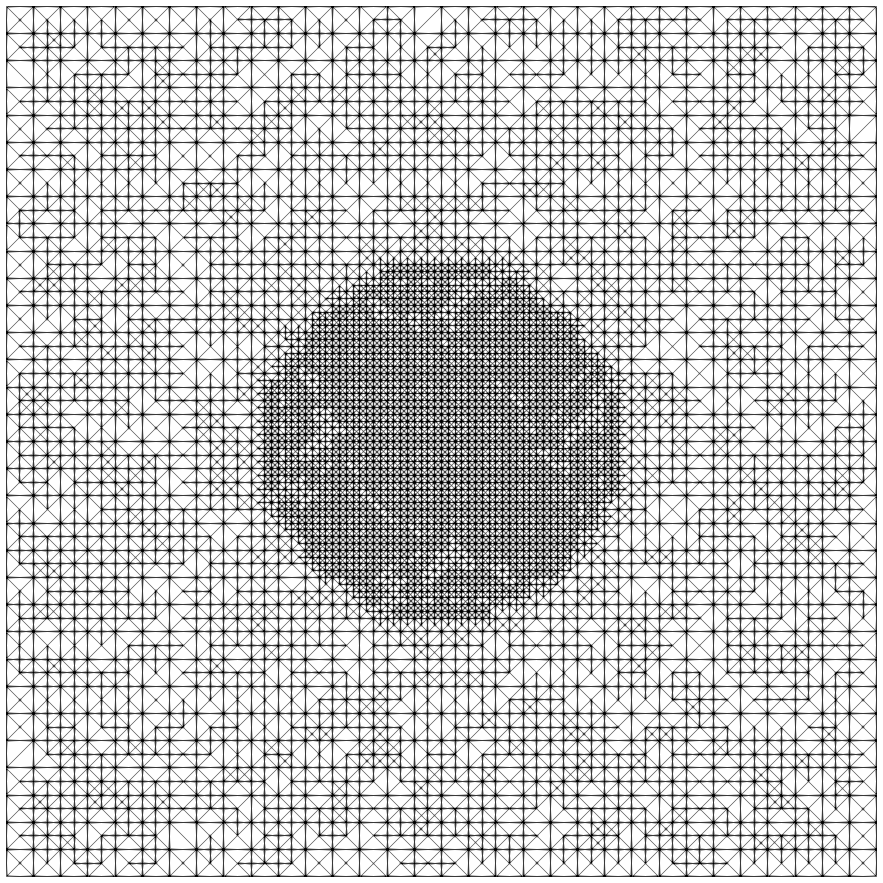}
\includegraphics[width=0.24\textwidth]{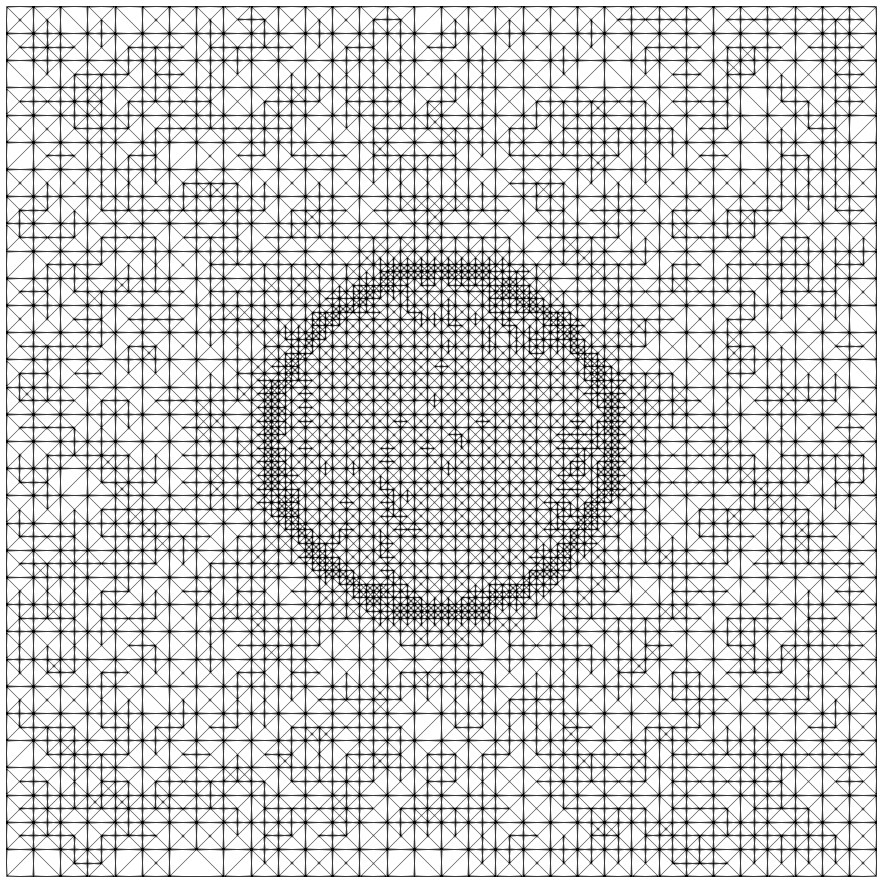}
\includegraphics[width=0.24\textwidth]{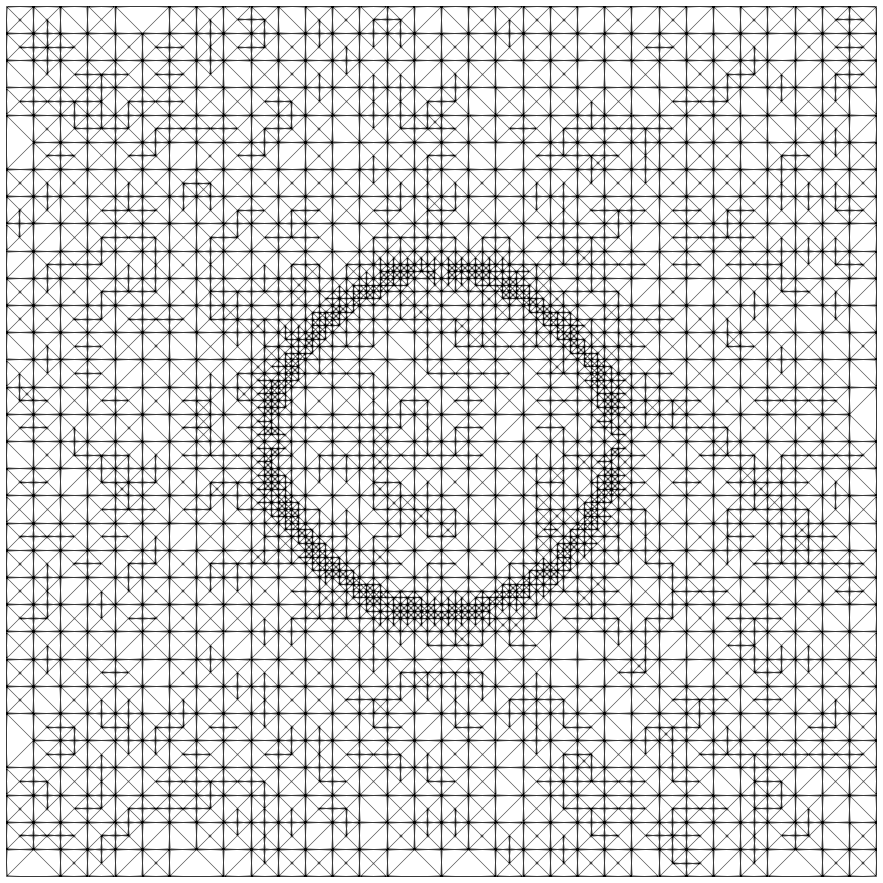}
\caption{Finite element mesh at time $t=0,0.0017,0.0026,0.05$ computed with tolerance $TOL_0$.}
\label{fig_mesh_tol2}
\end{figure}

One path of the solution computed by the adaptive finite element algorithm with finer tolerance $TOL_1$ is displayed in Figure~\ref{fig_sol_tol1}, 
the corresponding meshes are displayed in Figure~\ref{fig_mesh_tol1}.
The adaptive algorithm behaves analogically as in the previous experiment. Eventually, the meshes are refined along the discontinuity of the solution,
with a mesh size that is four times smaller than in the case of the larger tolerance $TOL_0$.
\begin{figure}[!htp]
\center
\includegraphics[width=0.24\textwidth]{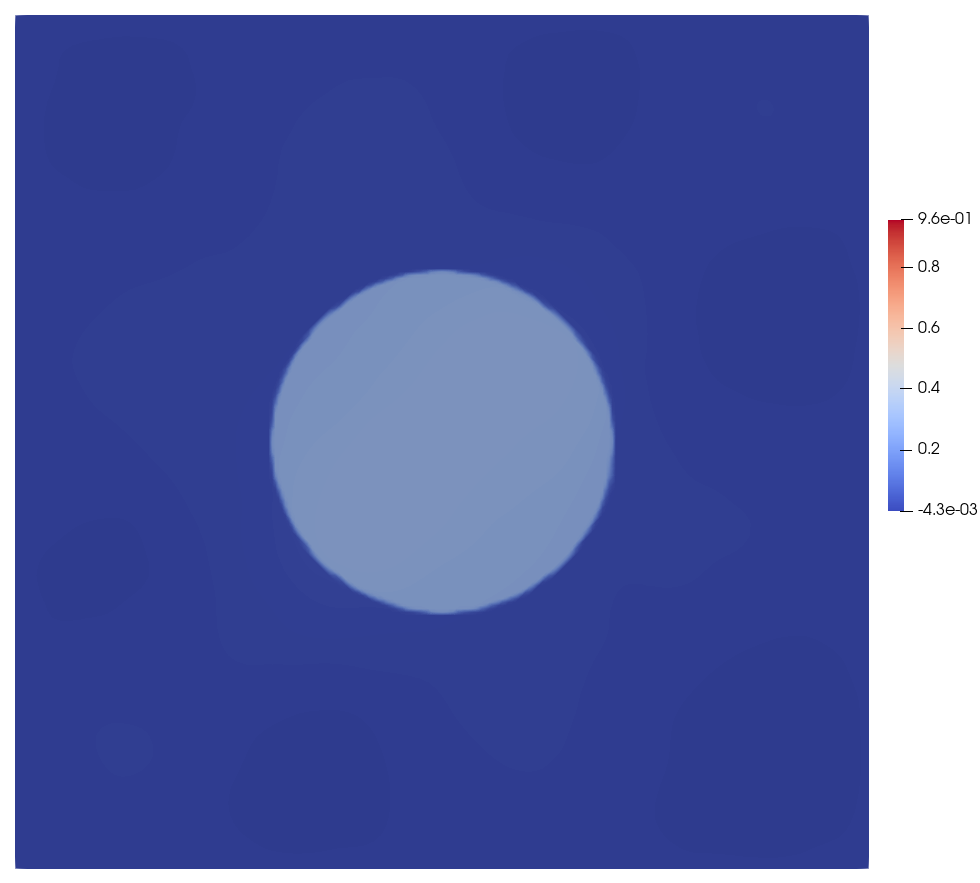}
\includegraphics[width=0.24\textwidth]{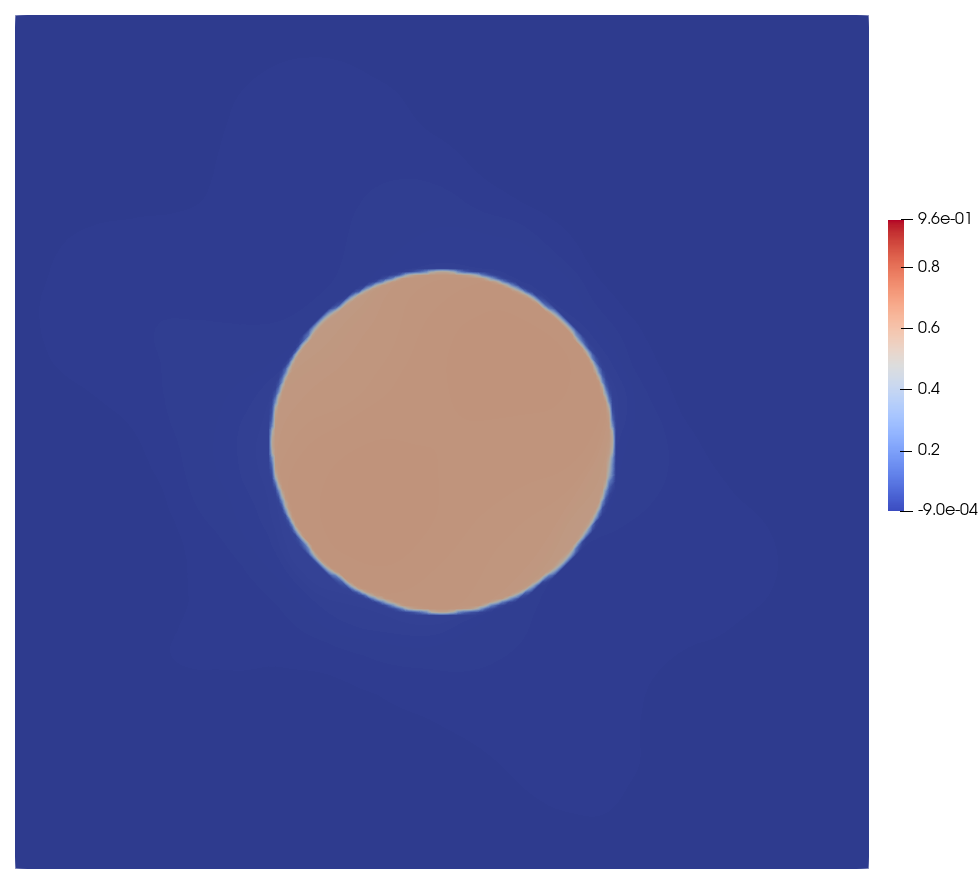}
\includegraphics[width=0.24\textwidth]{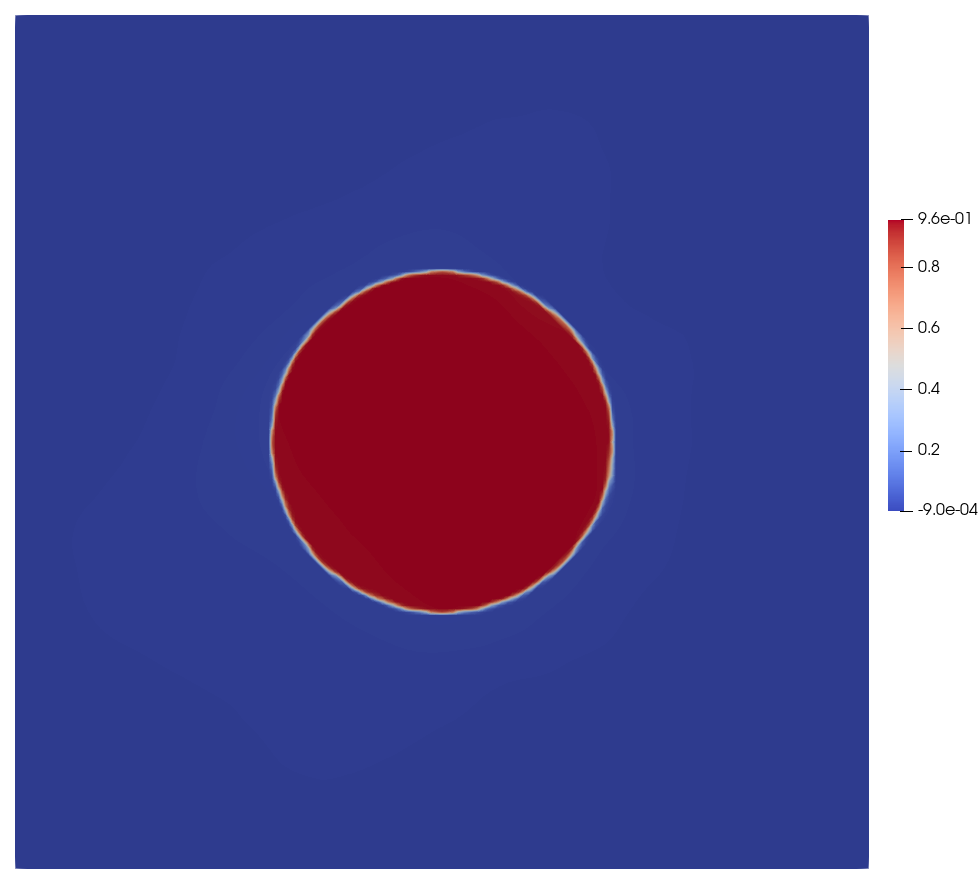}
\caption{Solution at time $t=0.0017,0.0038,0.05$ computed with tolerance $TOL_1$.}
\label{fig_sol_tol1}
\end{figure}
\begin{figure}[!htp]
\center
\includegraphics[width=0.24\textwidth]{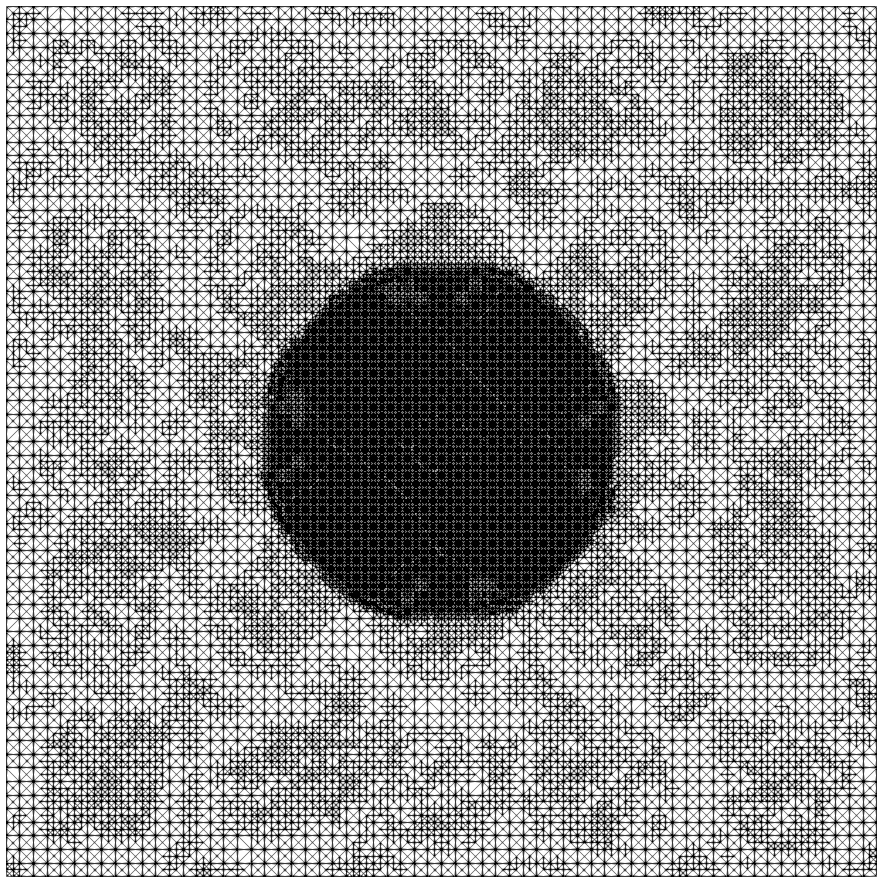}
\includegraphics[width=0.24\textwidth]{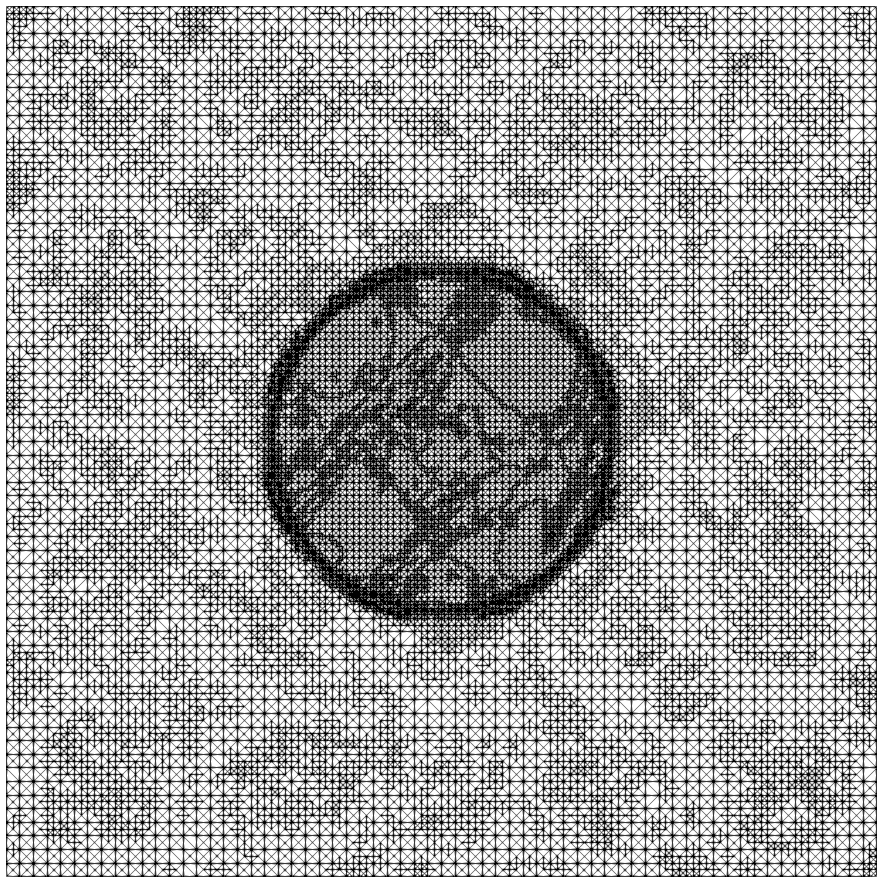}
\includegraphics[width=0.24\textwidth]{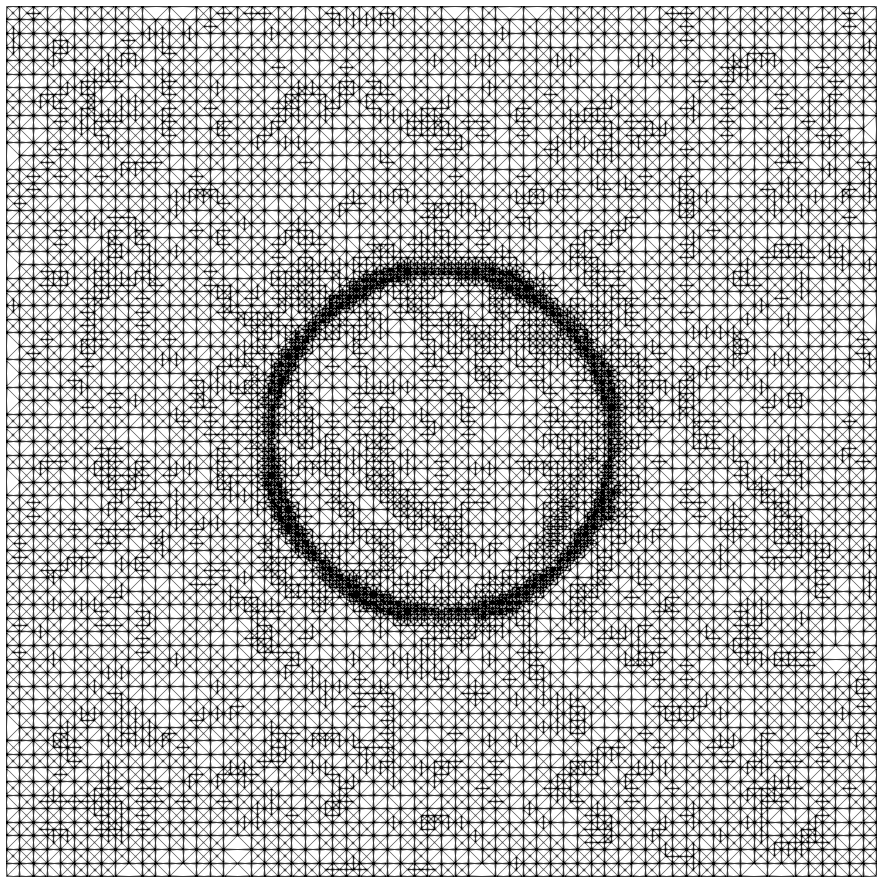}
\caption{Finite element mesh at time $t=0.0017,0.0038,0.05$ computed with tolerance $TOL_1$.}
\label{fig_mesh_tol1}
\end{figure}

Next, we study the behavior of the algorithm for the tolerance $TOL_k$, $k=0,1,2$.
In Figure~\ref{fig_fix_space} we display the evolution of the time-averaged spatial error indicator
and the corresponding number of degrees of freedom.
The time-averaged time error indicator and the corresponding time steps are displayed in Figure~\ref{fig_fix_time}.
For $t\approx T$ the numerical solution approaches a stationary state which is reflected by the growth of the time step towards the end of the computation.
\begin{figure}[!htp]
\center
\includegraphics[width=0.48\textwidth]{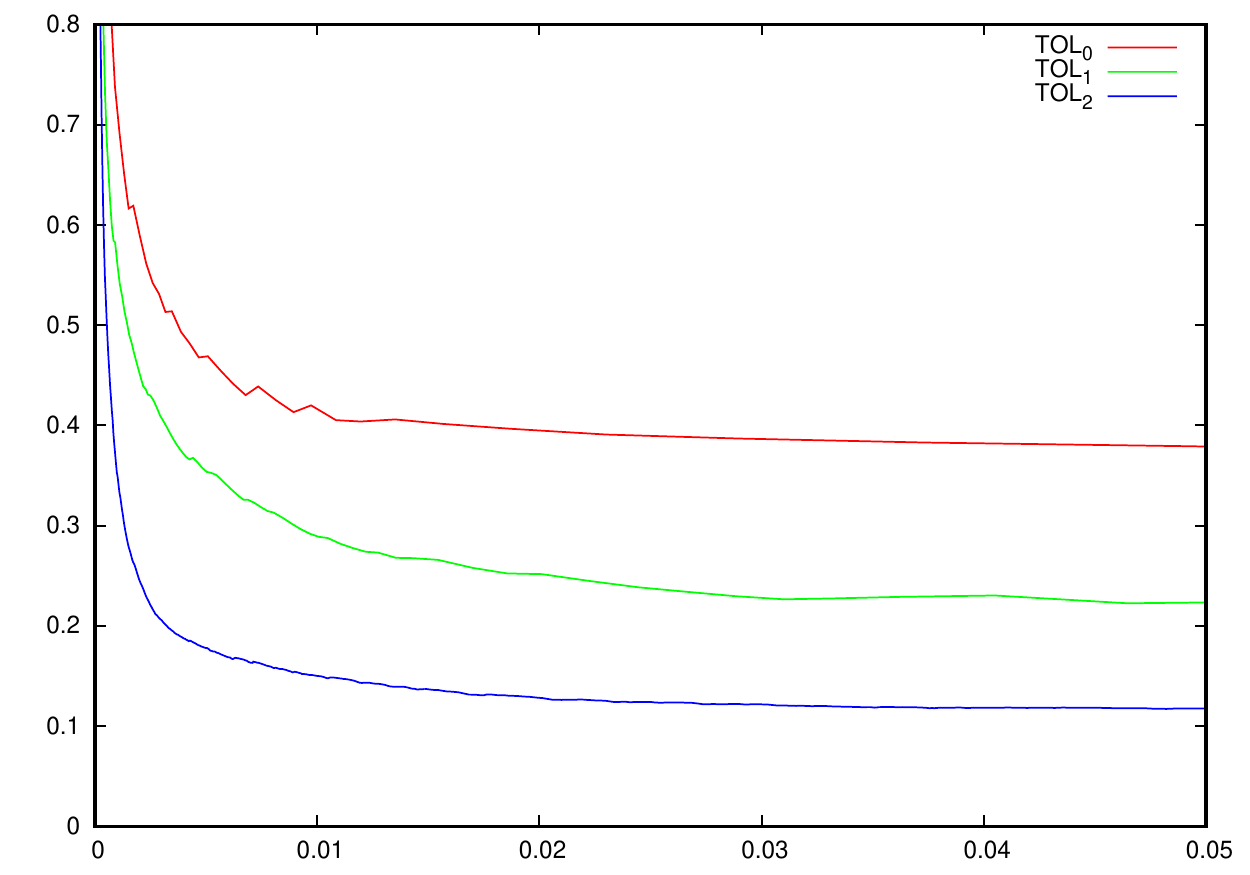}
\includegraphics[width=0.48\textwidth]{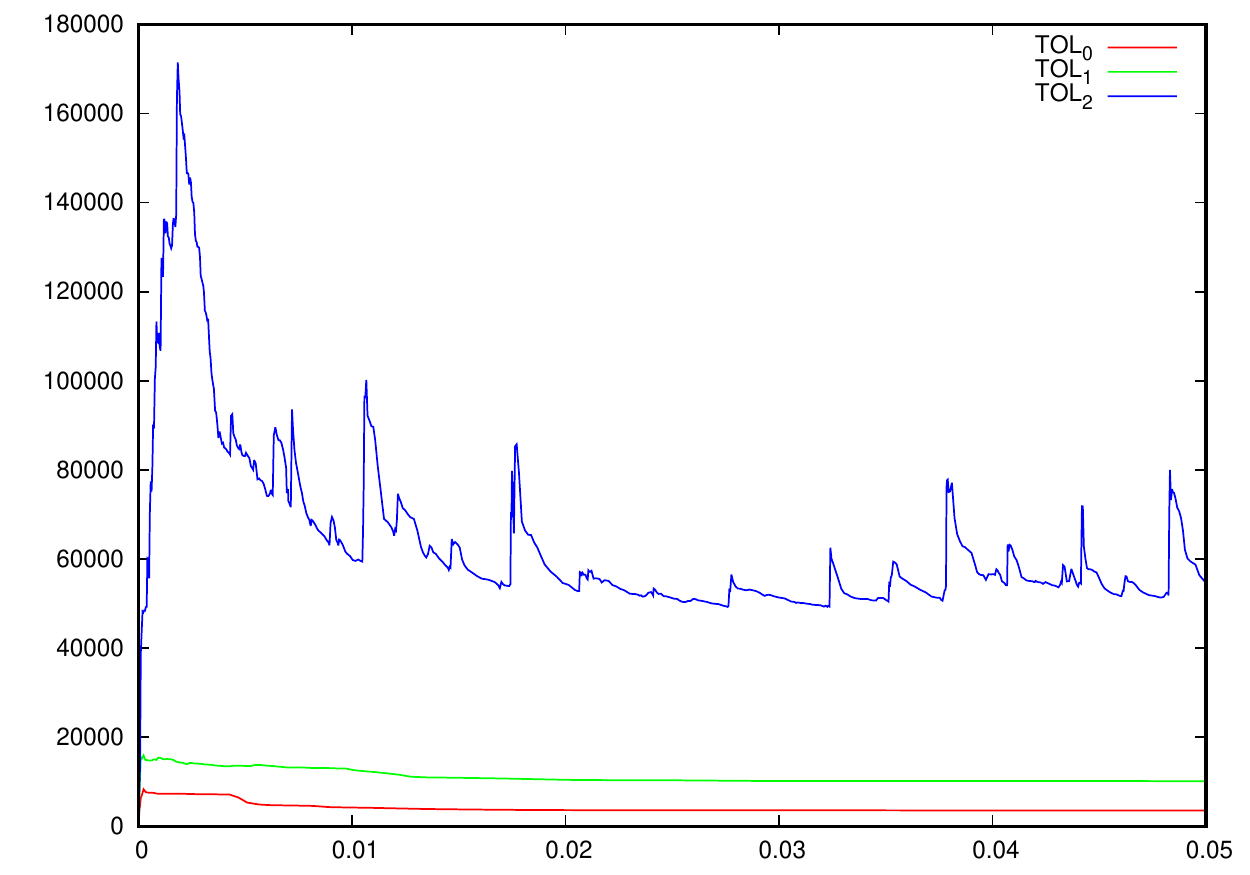}
\caption{Evolution of the spatial error indicator $t_n \rightarrow t_n^{-1} \sum_{i=1}^n \eta_{h}^i$ (left) and degrees of freedom $t_n \rightarrow \# \mathcal{T}_h^n$ (right) for $TOL_k$, $k=0,1,2$.}
\label{fig_fix_space}
\end{figure}
\begin{figure}[!htp]
\center
\includegraphics[width=0.48\textwidth]{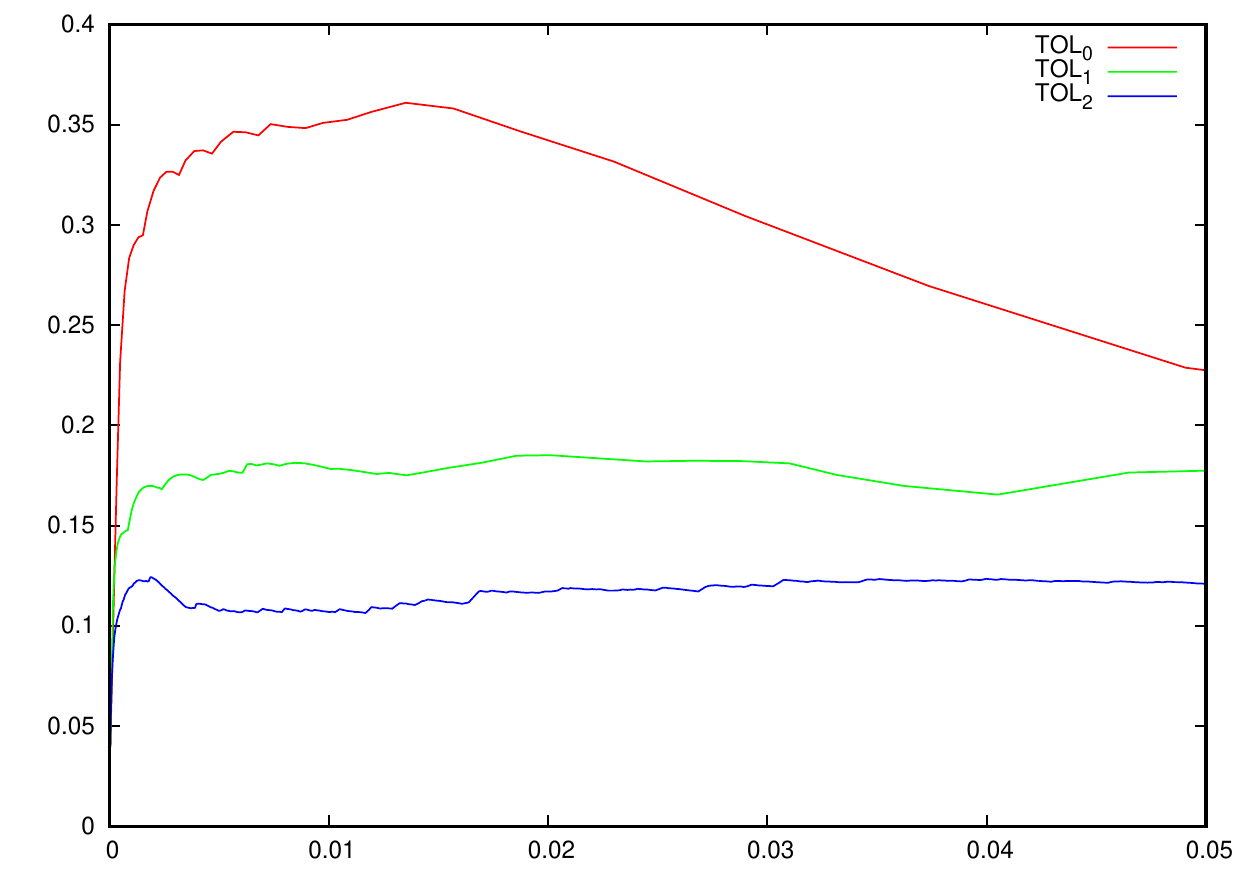}
\includegraphics[width=0.48\textwidth]{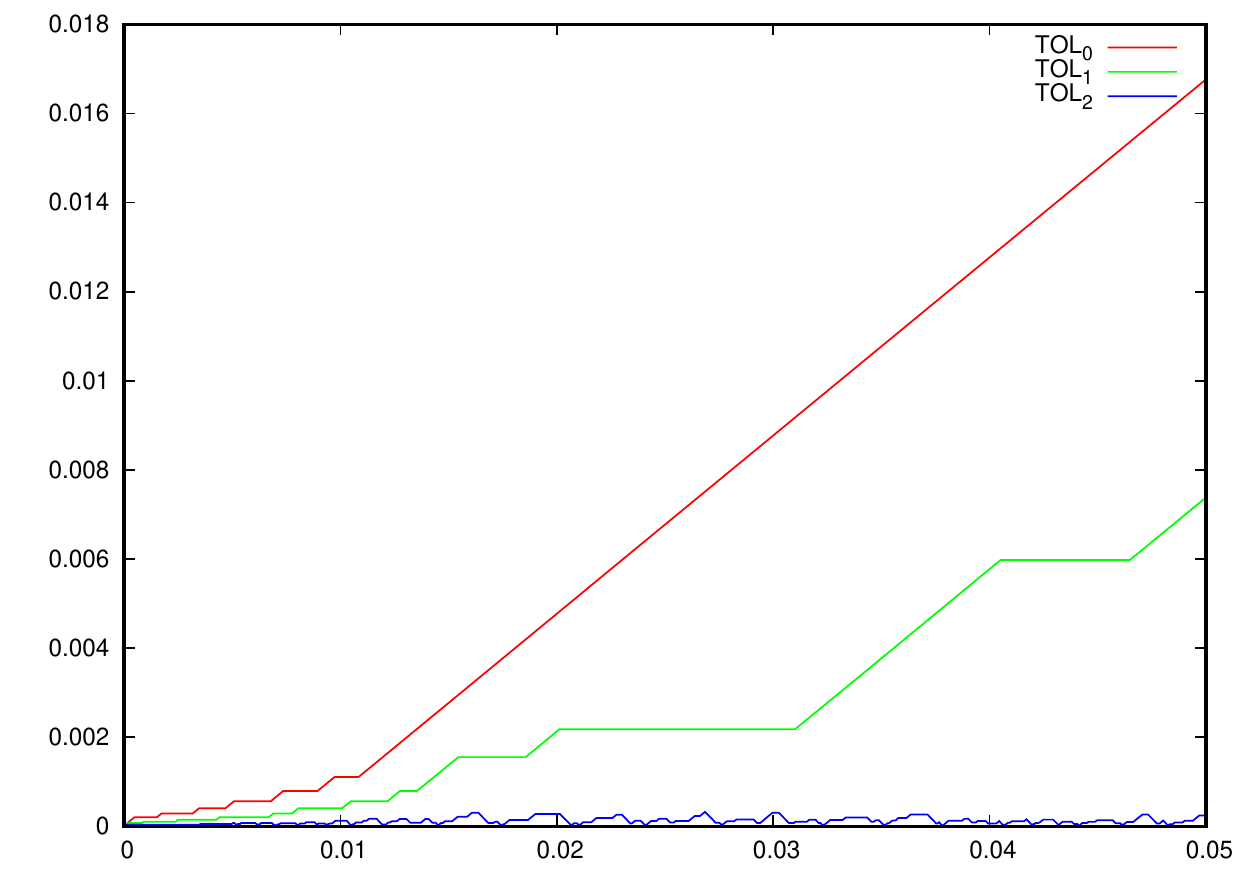}
\caption{Evolution of the time error indicator $t_n \rightarrow t_n^{-1} \sum_{i=1}^n \eta_{\mathrm{time},2}^i$ (left) and the timestep $t_n \rightarrow \tau_n$, $TOL_k$, $k=0,1,2$.}
\label{fig_fix_time}
\end{figure}

{\blue Next, we examine the effects of linearization 
on the solution.
We consider a (linear) semi-implicit scheme (SI), i.e. (\ref{fem_scheme})
with $|X^n_{\varepsilon,h}|_\eps$ in the the nonlinearity replaced by an explicit approximation $|X^{n-1}_{\varepsilon,h}|_\eps$.
Furthermore, we consider a linearized scheme obtained by the fixed point algorithm which is terminated after three iterations (FIX3)
as well as the original fixed point algorithm (FIX) with the tolerance $10^{-4}$.
In Figure~\ref{fig_lin} we display the time-averaged values of the linearization error indicator along with the time error indicator, the time step and the space error indicator.
We make the following observations. 
For the considered configuration of the parameters the linearization error indicator for the (SI) scheme attains similar values as the time error indicator, 
nevertheless for smaller tolerance (i.e., smaller time step and mesh size) the linearization error does not necessarily decrease.
The results obtained with the fixed point scheme (FIX3) indicate that the linearization error can be significantly reduced by performing just a few iterations.
Overall, the numerical results  obtained the three respective schemes (not displayed) were qualitatively very similar.

}

\begin{figure}[!htp]
\center
\includegraphics[width=0.48\textwidth]{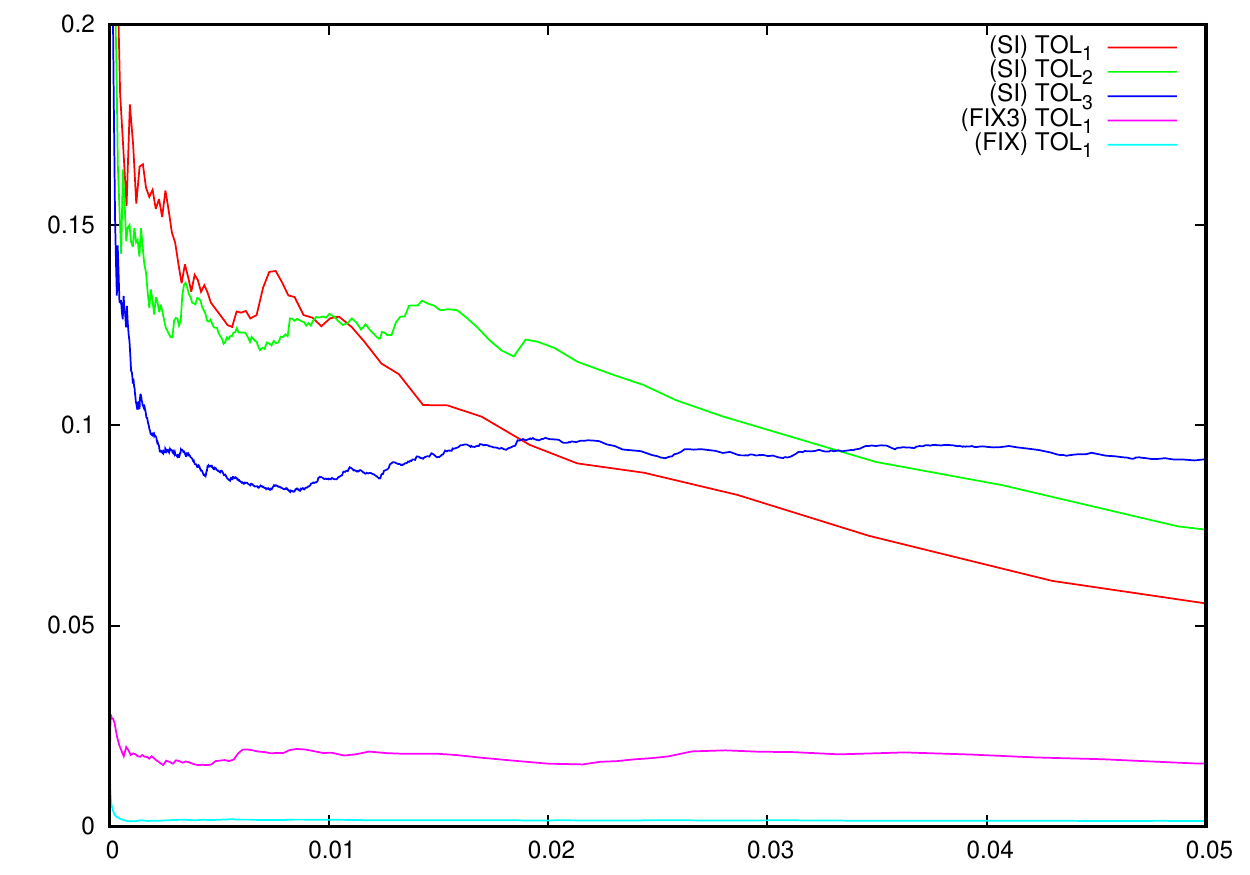}
\includegraphics[width=0.48\textwidth]{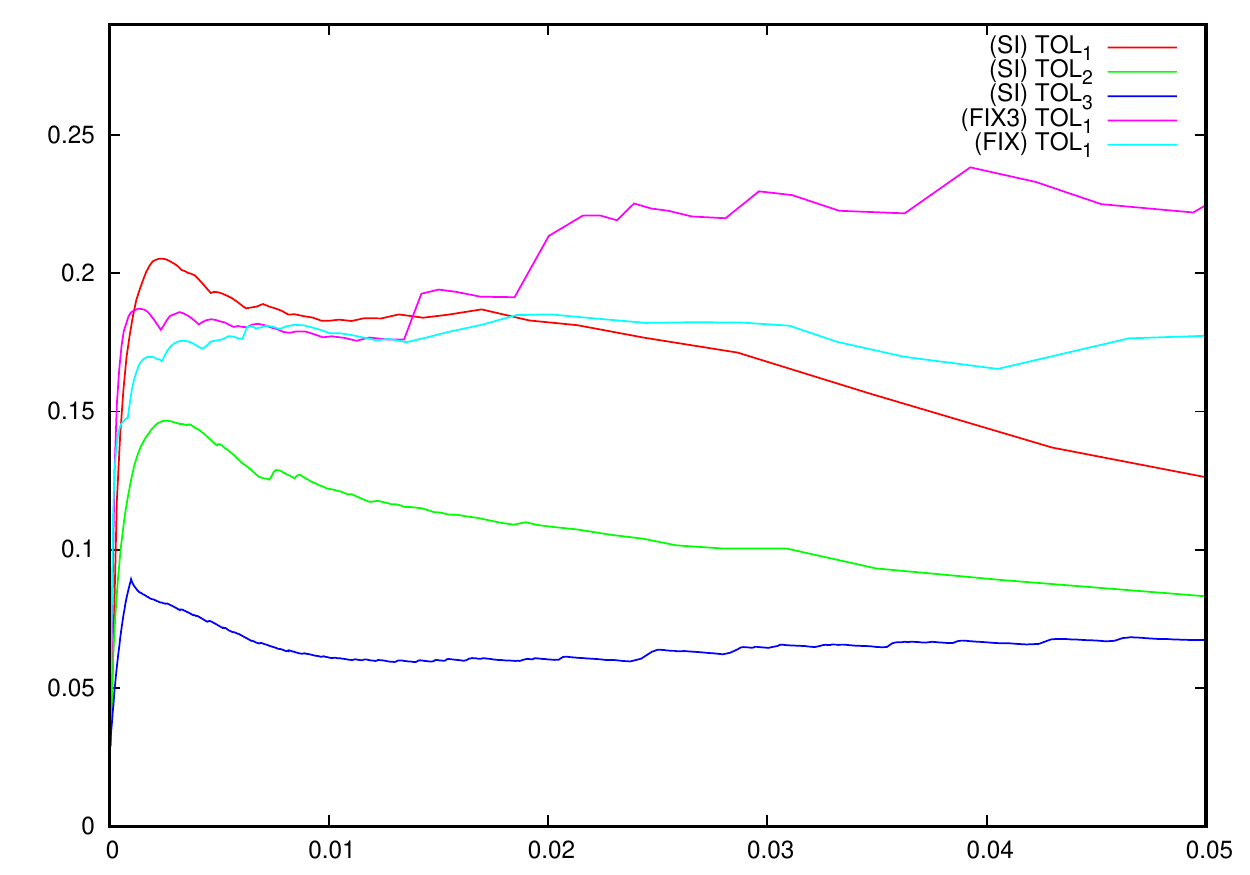}
\includegraphics[width=0.48\textwidth]{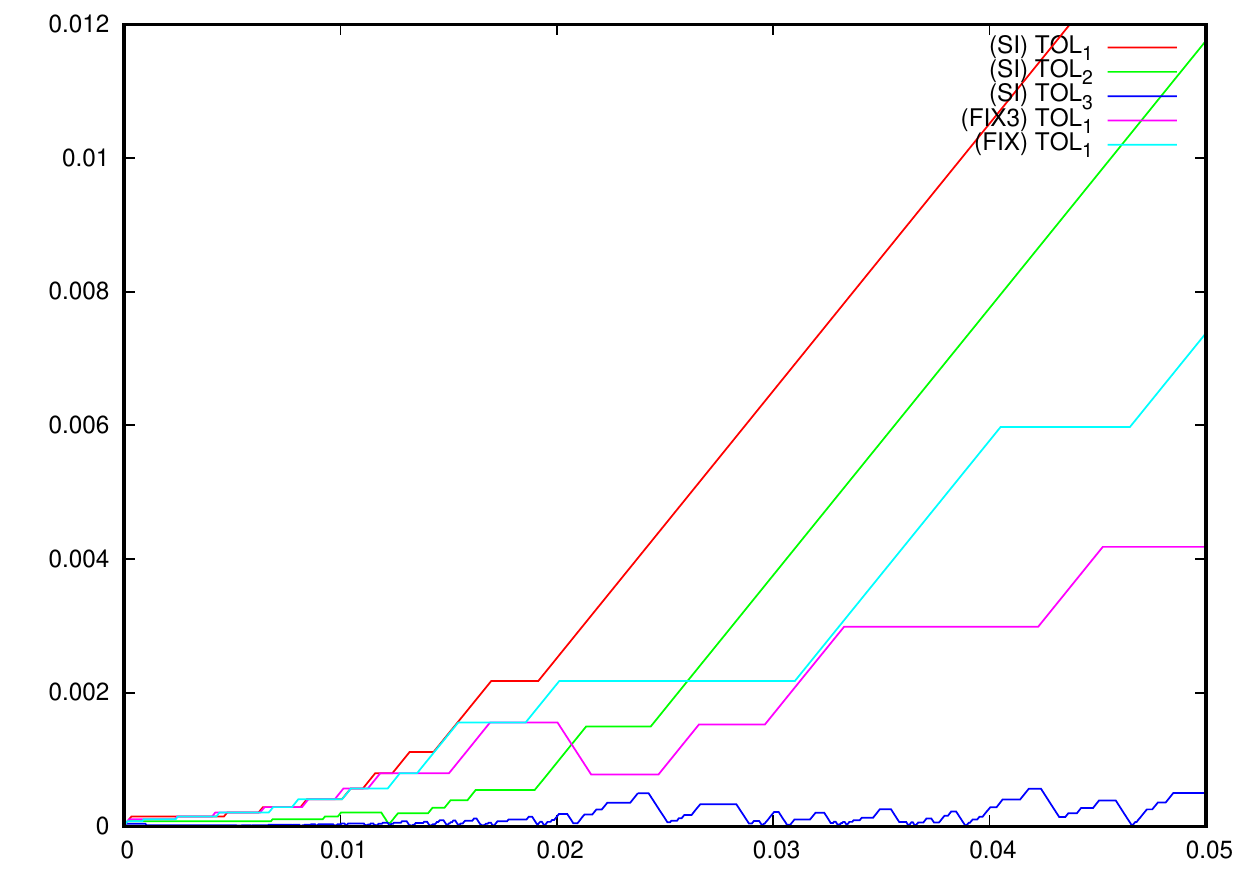}
\includegraphics[width=0.48\textwidth]{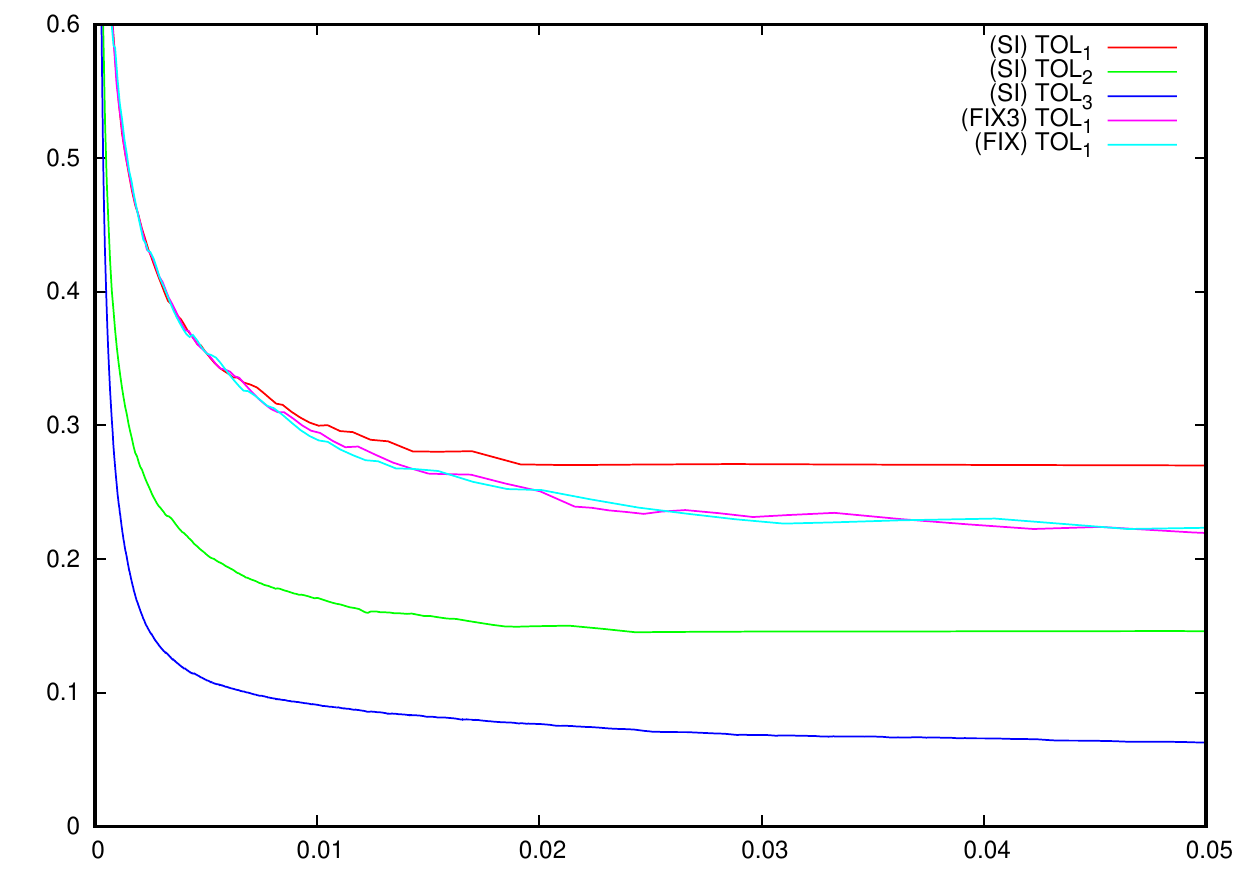}
\caption{Linearization error $t_n \rightarrow t_n^{-1} \sum_{i=1}^n \eta_{\mathrm{lin}}^i$ (top-left), time error $t_n \rightarrow t_n^{-1} \sum_{i=1}^n \eta_{\mathrm{time},2}^i$ (top-right),
timestep $t_n \rightarrow \tau_n$ (bottom-left), spatial error $t_n \rightarrow t_n^{-1} \sum_{i=1}^n \eta_{h}^i$ (bottom-right).}
\label{fig_lin}
\end{figure}

\section*{Acknowledgement}
We would like to thank the two referees careful reading of the paper and for their very inspiring comments and suggestions.


\bibliographystyle{plain}
\bibliography{refs_short}

\begin{thebibliography}{10}

\bibitem{book_attouch}
H.~Attouch, G.~Buttazzo, and G.~Michaille.
\newblock {\em Variational analysis in {S}obolev and {BV} spaces}, volume~6 of
  {\em MPS/SIAM Series on Optimization}.
\newblock Society for Industrial and Applied Mathematics (SIAM), Philadelphia,
  PA; Mathematical Programming Society (MPS), Philadelphia, PA, 2006.
\newblock Applications to PDEs and optimization.

\bibitem{Roeckner_TVF_paper}
V.~Barbu and M.~R\"{o}ckner.
\newblock Stochastic variational inequalities and applications to the total
  variation flow perturbed by linear multiplicative noise.
\newblock {\em Arch. Ration. Mech. Anal.}, 209(3):797--834, 2013.

\bibitem{bdn18}
S.~Bartels, L.~Diening, and R.H. Nochetto.
\newblock Unconditional stability of semi-implicit discretizations of singular
  flows.
\newblock {\em SIAM J. Numer. Anal.}, 56(3):1896--1914, 2018.

\bibitem{bartels2021}
S.~Bartels, R.~Tovey, and F.~Wassmer.
\newblock Singular solutions, graded meshes, and adaptivity for total-variation
  regularized minimization problems, 2021.

\bibitem{bartels2015}
S\"{o}ren Bartels.
\newblock Error control and adaptivity for a variational model problem defined
  on functions of bounded variation.
\newblock {\em Math. Comp.}, 84(293):1217--1240, 2015.

\bibitem{bns14}
S\"{o}ren Bartels, Ricardo~H. Nochetto, and Abner~J. Salgado.
\newblock Discrete total variation flows without regularization.
\newblock {\em SIAM J. Numer. Anal.}, 52(1):363--385, 2014.

\bibitem{sllg_book}
\v{L}. Ba\v{n}as, Z.~Brze\'{z}niak, M.~Neklyudov, and A.~Prohl.
\newblock {\em Stochastic ferromagnetism}, volume~58 of {\em De Gruyter Studies
  in Mathematics}.
\newblock De Gruyter, Berlin, 2014.
\newblock Analysis and numerics.

\bibitem{stvf_martingale}
\v{L}. Ba\v{n}as and M.~Ondrej\'{a}t.
\newblock Numerical approximation of probabilistically weak and strong
  solutions of the stochastic total variation flow.
\newblock \url{https://arxiv.org/abs/2205.01968}, 2022.

\bibitem{our_paper}
\v{L}. {Ba\v{n}as}, M.~{R\"{o}ckner}, and A.~{Wilke}.
\newblock Convergent numerical approximation of the stochastic total variation
  flow.
\newblock {\em Stoch. PDE: Anal. Comp.}, 9:437--471, 2020.
\newblock DOI: 10.1007/s40072-020-00169-4.

\bibitem{stvf_erratum}
\v{L}. Ba\v{n}as, M.~R\"{o}ckner, and A.~Wilke.
\newblock Erratum: ''{C}onvergent numerical approximation of the stochastic
  total variation flow''.
\newblock {\em Stoch. PDE: Anal. Comp.}, 2022.
\newblock accepted.

\bibitem{bv21}
\v{L}. Ba\v{n}as and C.~Vieth.
\newblock Robust a posteriori estimates for the stochastic {C}ahn-{H}illiard
  equation, 2022.

\bibitem{BrennerS02}
S.~C. Brenner and L.~R. Scott.
\newblock {\em The Mathematical Theory of Finite Element Methods (second
  edition)}.
\newblock Springer-Verlag, New York, 2002.

\bibitem{vohr21}
Cl\'{e}ment Canc\`es, Flore Nabet, and Martin Vohral\'{\i}k.
\newblock Convergence and a posteriori error analysis for energy-stable finite
  element approximations of degenerate parabolic equations.
\newblock {\em Math. Comp.}, 90(328):517--563, 2021.

\bibitem{cly06}
Carsten Carstensen, W.~Liu, and N.~Yan.
\newblock A posteriori {FE} error control for {$p$}-{L}aplacian by gradient
  recovery in quasi-norm.
\newblock {\em Math. Comp.}, 75(256):1599--1616, 2006.

\bibitem{cp11}
Antonin Chambolle and Thomas Pock.
\newblock A first-order primal-dual algorithm for convex problems with
  applications to imaging.
\newblock {\em J. Math. Imaging Vision}, 40(1):120--145, 2011.

\bibitem{dd96}
G.~Da~Prato and Debussche. A.
\newblock Stochastic cahn-hilliard equation.
\newblock {\em Nonlinear Analysis: Theory, Methods \& Applications}, 26(2):241
  -- 263, 1996.

\bibitem{vohr15}
Daniele~A. Di~Pietro, Martin Vohral\'{\i}k, and Soleiman Yousef.
\newblock Adaptive regularization, linearization, and discretization and a
  posteriori error control for the two-phase {S}tefan problem.
\newblock {\em Math. Comp.}, 84(291):153--186, 2015.

\bibitem{vohr13}
V\'{\i}t Dolej\v{s}\'{\i}, Alexandre Ern, and Martin Vohral\'{\i}k.
\newblock A framework for robust a posteriori error control in unsteady
  nonlinear advection-diffusion problems.
\newblock {\em SIAM J. Numer. Anal.}, 51(2):773--793, 2013.

\bibitem{ej91}
K.~Eriksson and C.~Johnson.
\newblock Adaptive finite element methods for parabolic problems. {I}. {A}
  linear model problem.
\newblock {\em SIAM J. Numer. Anal.}, 28(1):43--77, 1991.

\bibitem{book_evans}
L.C. Evans.
\newblock {\em Partial differential equations}, volume~19 of {\em Graduate
  Studies in Mathematics}.
\newblock American Mathematical Society, Providence, RI, second edition, 2010.

\bibitem{Prohl_TVF_numerics}
X.~Feng and A.~Prohl.
\newblock Analysis of total variation flow and its finite element
  approximations.
\newblock {\em M2AN Math. Model. Numer. Anal.}, 37(3):533--556, 2003.

\bibitem{Veeser}
F.~Fierro and A.~Veeser.
\newblock A posteriori error estimators for regularized total variation of
  characteristic functions.
\newblock {\em SIAM J. Numerical Analysis}, 41:2032--2055, 01 2003.

\bibitem{fv03}
Francesca Fierro and Andreas Veeser.
\newblock On the a posteriori error analysis for equations of prescribed mean
  curvature.
\newblock {\em Math. Comp.}, 72(244):1611--1634, 2003.

\bibitem{kp06}
Olivier Juan, Renaud Keriven, and Gheorghe Postelnicu.
\newblock Stochastic motion and the level set method in computer vision:
  Stochastic active contours.
\newblock {\em International Journal of Computer Vision}, 69:7--25, 01 2006.

\bibitem{k13}
Christian Kreuzer.
\newblock Reliable and efficient a posteriori error estimates for finite
  element approximations of the parabolic {$p$}-{L}aplacian.
\newblock {\em Calcolo}, 50(2):79--110, 2013.

\bibitem{kr_07}
N.V. Krylov and B.L. Rozovskii.
\newblock Stochastic evolution equations.
\newblock In {\em Stochastic differential equations: theory and applications},
  volume~2 of {\em Interdiscip. Math. Sci.}, pages 1--69. World Sci. Publ.,
  Hackensack, NJ, 2007.

\bibitem{ln05}
Omar Lakkis and Ricardo~H. Nochetto.
\newblock A posteriori error analysis for the mean curvature flow of graphs.
\newblock {\em SIAM J. Numer. Anal.}, 42(5):1875--1898, 2005.

\bibitem{Roeckner_book}
W.~Liu and M.~R\"{o}ckner.
\newblock {\em Stochastic partial differential equations: an introduction}.
\newblock Universitext. Springer, Cham, 2015.

\bibitem{Prohl_adapt}
A.~Majee and A.~Prohl.
\newblock A posteriori error estimation and space-time adaptivity for a linear
  stochastic pde with additive noise.
\newblock {\em IMA J. Numer. Anal.}, 2021.

\bibitem{mn06}
Charalambos Makridakis and Ricardo~H. Nochetto.
\newblock A posteriori error analysis for higher order dissipative methods for
  evolution problems.
\newblock {\em Numer. Math.}, 104(4):489--514, 2006.

\bibitem{nsv00}
R.~H. Nochetto, A.~Schmidt, and C.~Verdi.
\newblock A posteriori error estimation and adaptivity for degenerate parabolic
  problems.
\newblock {\em Math. Comp.}, 69(229):1--24, 2000.

\bibitem{ns06}
Ricardo~H. Nochetto and Giuseppe Savar\'{e}.
\newblock Nonlinear evolution governed by accretive operators in {B}anach
  spaces: error control and applications.
\newblock {\em Math. Models Methods Appl. Sci.}, 16(3):439--477, 2006.

\bibitem{noch00}
Ricardo~H. Nochetto, Giuseppe Savar\'{e}, and Claudio Verdi.
\newblock A posteriori error estimates for variable time-step discretizations
  of nonlinear evolution equations.
\newblock {\em Comm. Pure Appl. Math.}, 53(5):525--589, 2000.

\bibitem{Schellnegger}
A.~Prohl and C.~Schellnegger.
\newblock Adaptive concepts for stochastic partial differential equations.
\newblock {\em Journal of Scientific Computing}, 80, 07 2019.

\bibitem{swp14}
B.~Sixou, L.~Wang, and F.~Peyrin.
\newblock Stochastic diffusion equation with singular diffusivity and
  gradient-dependent noise in binary tomography.
\newblock {\em J. Phys.: Conf. Ser.}, 69:012001, 542.

\end{thebibliography}
\end{document}